\documentclass[10pt]{amsart}
\usepackage{mathrsfs}
\usepackage{a4wide}
\usepackage{amsmath,amssymb,amsfonts,amsthm, mathtools, changes}

\usepackage{color}
\definecolor{dgreen}{RGB}{0,126,0}
\usepackage{soul}



\theoremstyle{plain}

\newtheorem{theorem}{Theorem}[section]
\newtheorem{lemma}[theorem]{Lemma}
\newtheorem{proposition}[theorem]{Proposition}
\newtheorem{corollary}[theorem]{Corollary}
\newtheorem{remark}[theorem]{Remark}

\theoremstyle{definition}
\numberwithin{equation}{section}

\def\pa{\partial}
\def\til{~}
\def\bary{\mathrm{bar}}
\def\beq{\begin{equation}}
\def\eeq{\end{equation}}

\def\de{\delta}
\def\R{\mathbb{R}}

\def\Z{\mathbb{Z}}
\def\N{\mathbb{N}}
\def\dist{\textup{dist}}

\def\H{\mathcal{H}}
\def\D{\mathcal{D}}
\def\P{P}

\def\ov{\overline}

\renewcommand{\div}{\mathrm{div}}

\newcommand{\f}{\varphi}
\newcommand{\e}{\varepsilon}

\newcommand{\medint}{-\kern -,375cm\int}
\newcommand{\medintinrigo}{-\kern -,315cm\int}

\def\beq{\begin{equation}}
\def\eeq{\end{equation}}

 \title{Long time behaviour of discrete volume preserving\\ mean curvature flows}

 \author[M. Morini]
 {Massimiliano Morini}
 \address[Massimiliano Morini]{Dip.~di Scienze Matematiche Fisiche e Informatiche, Univ.~Parma, Italy}
 \email[M. Morini]{massimiliano.morini@unipr.it}

 \author[M. Ponsiglione]
 {Marcello Ponsiglione}
 \address[Marcello Ponsiglione]{Dip.~di Matematica,
 Univ.~Roma-I ``La Sapienza'', Roma, Italy}
 \email[M. Ponsiglione]{ponsigli@mat.uniroma1.it}

 \author[E. Spadaro]
 {Emanuele Spadaro}
 \address[Emanuele Spadaro]{Dip.~di Matematica,
 Univ.~Roma-I ``La Sapienza'', Roma, Italy}
 \email[E. Spadaro]{spadaro@mat.uniroma1.it}

 \keywords{nonlocal curvature flows, nonlocal geometric flows,
 minimizing movements, viscosity solutions}

\begin{document}

\begin{abstract}
In this paper we analyse the Euler implicit scheme for the volume preserving mean curvature flow.
We prove the exponential convergence of the scheme to a finite union of disjoint balls with equal volume for any bounded initial set with finite perimeter. 
\vskip5pt
\noindent
\textsc{Keywords}:  Geometric evolutions;
rigidity results;
minimizing movements; variational methods.  
\vskip5pt
\noindent
\textsc{AMS subject classifications:}  
53C44; 
53C24;
49M25;
49Q20.
\end{abstract}

\maketitle

 \bibliographystyle{plain}

\tableofcontents

\section*{Introduction}
We study the asymptotic behaviour of a discrete-in-time approximation of the mean curvature flow with constrained volume.
This geometric evolution consists in a family of evolving sets $ [0,+\infty)\ni 
t\mapsto E_t\subset\R^N$, whose normal velocities $V(x,t)$ at any point $x\in \partial E_t$ are proportional to the scalar mean curvature $H_{\pa E_t}(x)$ of $\partial E_t$ at $x$ corrected by a constant forcing term which guarantees that the measure of $|E_t|$ is preserved during the flow:
\begin{equation}\label{e:equazione modello}
V(x,t) = \overline  H_{\pa E_t} - H_{\pa E_t}(x), \qquad \overline  H_{\pa E_t} := \frac{1}{\mathcal{H}^{N-1}(\partial E_t)}\int_{\partial E_t} H_{\pa E_t}(x) \, d\mathcal{H}^{N-1}(x) .
\end{equation}
Under suitable assumptions, this geometric flow is a model for coarsening phenomena in physical systems.
For example, one can consider mixtures that,
after a first relaxation time, can be described by two subdomains of nearly pure phases far from equilibrium,
evolving in a way to minimize the total interfacial area between the phases while keeping their volume constant 
(further details on the physical background can be found in \cite{CRCT95, MuSe13,TW72,W61}).

It is well-known that a typical evolution of \eqref{e:equazione modello} develops singularities of different kinds in finite time: components shrinking to points and disappearance, collisions and merging of domains, pinch-offs etc{\ldots} 
Compared with the more familiar unconstrained mean curvature flow, the possible changes in the topology of the evolving sets $E_t$ of \eqref{e:equazione modello} are  even wider, because of the nonlocal character of the flow and the subsequent lack of comparison principles. There exist singular solutions also in the two dimensional case (see \cite{M, MS}). 

It is then clear that the analysis of the long time behaviour of systems exhibiting coarsening requires the introduction of suitable notions of weak solutions which allow the formation of singularities and extend the flow past the singular times.
This is a well-established feature of curvature flows, and many definitions of weak solutions have been introduced in the literature.
Here, we follow the method proposed independently by Almgren, Taylor and Wang \cite{ATW} and by Luckhaus and Sturzenhecker \cite{LS}, based on De Giorgi's minimizing movement approach.
The authors consider an implicit time-discretization of the flow, which is regarded as a gradient flow of the perimeter functional with respect to a metric resembling the $L^2$-distance. 
The limiting time-continuous flow constructed with this method is usually referred to as flat flow.
This approach has the advantage to be easily adapted to volume constrained mean curvature flow as shown in \cite{MSS}, producing global-in-time solutions which then permit to analyse the equilibrium configurations reached in the long time asymptotics.

Previous results on the long time behaviour of the volume constrained mean curvature flow are mostly confined to the case of smooth solutions, starting from specific classes of initial regular sets ensuring the existence of global regular solutions; for example  uniformly convex and nearly spherical initial sets (see \cite{H, ES}), or nearly strictly stable initial sets in the three and four dimensional flat torus (see \cite{Nii}).  For more general initial data, 
the long time behaviour in the context of flat flows of convex and star-shaped sets (see \cite{BCCN, KK})  has been characterized only up to (possibly diverging in the case of \cite{BCCN}) translations.

In this paper we characterize the long-time limits of the discrite-in-time approximate flows constructed by the Euler implicit scheme introduced in \cite{ATW, LS} plus the volume constraint.
Our main result in Theorem \ref{mainthm} establishes that for every initial bounded set $E_0$ with finite perimeter, any discrete-in-time mean curvature flow with volume constraint  converges exponentially fast to a finite union of disjoint balls with equal radii.

\subsection{Exponential convergence of dissipations}
The main issue here is to prove that the evolving sets remain uniformly bounded for all times and that the long-time limit is truly unique (and not just up to translations).
 This is a non-trivial fact, since the discrete scheme (as well as its continuous counterpart) does not satisfies comparison principles. To the best of our knowledge, the boundedness of the flow was not known previously for any weak notion of the nonlocal mean curvature flow.
The problem is that, summing the dissipations of the discrete schemes formally provides a $L^2$ (with respect to time) bound on the velocity of the evolving set, rather than  the necessary $L^1$ bound: roughly speaking, one needs to sum the square roots of the dissipations rather than the dissipations themself.

By a compactness argument and a characterization of the stationary configurations of the discrete flow (based on a recent theorem by Delgadino and Maggi \cite{DM} on Caccioppoli sets with constant weak mean curvature), the connected components of the evolving sets converge up to translations to balls. 

Therefore, in order to improve the estimate on the dissipations, 
we compare at each time step the energy of the actual minimizers with that of a properly placed union of balls (one nearby each connected component of the evolving sets).
In this way, it turns out to be necessary to 
estimate the dissipation of a nearly spherical set with respect to a close-by ball (this condition is  definitively reached by each connected component of the flow thanks to the compactness argument above) with the actual dissipation of the flow.
Indeed, if this happens, one can control the sum of the dissipations for all times larger than any time $t$ from above by the energy dissipated at time $t$, thus clearly leading to an exponential decay of the dissipations and in turn of the velocity of the evolvings sets. 

The comparison of the dissipation of the flow and the one with respect to balls is equivalent to prove a control of the $L^2$-norm of the function parametrizing a nearly spherical set with the $L^2$-norm of the oscillation of its mean curvature.
In the case of zero oscillation, this is nothing but Alexandrov's theorem on the characterization of the spheres as the unique embedded hypersurfaces with constant mean curvature and thus the aforementioned estimate  may be regarded as a quantitative  version of it.
Its proof in turn relies on a result proven by Krummel and Maggi \cite[Theorem~1.10]{KM}, which can be seen as a sort of higher order Fuglede-type estimate involving the first variation of the perimeter rather than the perimeter itself.
For different quatitative versions of Alexandrov's theorem see also \cite{CV, CM17, D+18}.

\subsection{Comments and open questions}
It remains an open question to extend the results of the present paper to the flat flows, that is to obtain uniform estimates as the time step converges to zero.
We conjecture the result to hold true, but the compactness arguments used in this paper in order to gain the closeness to sphere is not quantitative and does not allow to pass to the limit.
Moreover, it is worth mentioning that, both nonlocal flat flows and discrete-in-time nonlocal flows are not uniquely defined. There are different ways to impose the volume constraint (by constraining the volume  at each time-step, or by penalization, or by tuning suitably a forcing term). The flows so defined can be different (also because of the congenital non-uniqueness of mean curvature flows), but we expect  that the asymptotics are always finite unions of equal volume balls.

Finally, we stress that the main Theorem \ref{mainthm} is sharp for what concerns the limiting configurations. Indeed, we can show that the union of suitably distant balls with equal radii is a stationary solution for the discrete flow and, hence, a possible asymptotics. However, such solutions are clearly unstable, because whatever small increase in one of the radii (and consequent decrease in the others) will drive the evolution towards a different asymptotic limit. This is easily seen for two disjoint balls with equal radii: any configuration of concentric balls with the same total volume but slightly perturbed radii will have strictly lower perimeter  and thus will converge to  a single ball (see Corollary~\ref{cor:menodidue}). 
In this regard, we conjecture that, generically, the evolution of an initial set $E_0$ will converge to a {\it unique} limiting ball with the same volume.

\subsection{Structure of the paper}
In the first section we prove the Fuglede-type inequality for nearly spherical sets with almost constant mean curvature. Section 2 is then devoted to the introduction of the incremental problem at the basis of the discrete-in-time mean curvature flow.
Finally, in the last section of the paper we prove the exponential convergence of the discrete flow to a union of well-separated balls with the equal volume.
 
\section{A quantitative Alexandrov Theorem for nearly spherical sets}
 In this section we prove a variant of a stability inequality by Krummel and Maggi \cite[Theorem 1.10]{KM} related to Alexandrov's theorem, whose statement we recall for the reader's convenience. To this aim, and for later use, we fix the following notation:
$B$ denotes the unit ball of $\R^N$.  Given ${m}>0$, we denote by $B^{(m)}$ the ball centered at the origin with volume $m$ and we denote by $r(m)$ its radius $\Big(\frac{m}{\omega_N}\Big)^{\frac1N}$.
For  any  measurable function $\varphi:\partial B^{(m)} \to (-1, +\infty)$, we denote by $E_{\f,m}$ the set in $\R^N$ whose boundary is described through the radial graph of $\f$, namely   
\beq\label{schifo}
 E_{\f, m}:= \big\{t x \in \R^N : \, x\in\partial B^{(m)},\; 0\le t  \le 1+ \varphi(x)\big\}.
\eeq
In the case $m=\omega_N$ we will write $B$ instead of $B^{(\omega_N)}$ (to denote the unit ball) and  $ E_{\f}$ instead of $ E_{\f, \omega_N}$.
If the parametrizing function $\f$ has small $L^\infty$-norm, then the set $E_{\f, m}$ can be regarded as a graphical radial pertubation of the ball $B^{(m)}$. We finally recall that given a sufficiently regular set $E$,  $H_{\partial E}$ stands for the sum of the principal curvatures of $\pa E$ (with respect to the orientation given by the outward normal).

\begin{theorem}[\cite{KM}]\label{th:KM}
There exist $\delta\in (0,\frac12)$ and $C>0$ with the following property: For  any  $f \in C^1(\partial B) \cap H^2(\partial B)$ such that   $\|f\|_{C^1}\le \delta$ and $\bary(E_f)=0$,
  we have
$$
\|f\|_{H^1(\partial B)}\le C \| H_{\partial E_f} - (N-1)) \|_{L^2(\partial B)} \,.
$$
\end{theorem}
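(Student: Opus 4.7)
The plan is to prove the estimate by linearising the mean-curvature operator around the unit sphere, decomposing $f$ in spherical harmonics, and using the barycentre constraint to handle the translational kernel of the linearised operator. This is the classical Fuglede strategy, applied here at the level of the first variation of perimeter rather than of the perimeter itself.

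First I would perform the Taylor expansion of $H_{\pa E_f}$ at $f=0$, using the parametrisation $\pa E_f = \{(1+f(x))x : x\in\pa B\}$, to obtain
\[
H_{\pa E_f} - (N-1) \;=\; -\Delta_{\pa B}f - (N-1)f + Q(f),
\]
where $\Delta_{\pa B}$ is the spherical Laplacian, $-\Delta_{\pa B}-(N-1)$ is (minus) the Jacobi operator of $\pa B$ (using $|A_{\pa B}|^2=N-1$), and $Q(f)$ is a nonlinear remainder. The crucial quantitative information is the structural bound
\[
\|Q(f)\|_{L^2(\pa B)} \;\le\; C\,\|f\|_{C^1(\pa B)}\,\|f\|_{H^2(\pa B)},
\]
obtained by bookkeeping of each quadratic term in the expansion of $H_{\pa E_f}$ as a rational function of $(f,\nabla f,\nabla^2 f)$, placing $L^\infty$-norms on the lowest-derivative factors and an $L^2$-norm on the factor carrying $\nabla^2 f$.

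Next I would decompose $f=\sum_{k\ge0}a_k Y_k$ on the spherical harmonics, with $-\Delta_{\pa B}Y_k=\lambda_k Y_k$ and $\lambda_k=k(k+N-2)$, so that the linearised operator has eigenvalues $\mu_k:=\lambda_k-(N-1)$. These satisfy the usual trichotomy: $\mu_0=-(N-1)\ne 0$, $\mu_1=0$ (the $Y_1$'s are the coordinate functions generating translations), and $\mu_k^2\gtrsim 1+\lambda_k$ for $k\ge 2$. Consequently
\[
\sum_{k\ne 1}(1+\lambda_k)\,a_k^2 \;\le\; C\,\bigl(\|H_{\pa E_f}-(N-1)\|_{L^2}^2 + \|Q(f)\|_{L^2}^2\bigr).
\]
The $k=1$ coefficients are controlled instead by the barycentre constraint: a polar-coordinate computation and Taylor expansion of $(1+f)^{N+1}$ give
\[
0 \;=\; |E_f|\,\bary(E_f) \;=\; \int_{\pa B} x\,f(x)\,d\H^{N-1}(x) + O(\|f\|_{L^2}^2),
\]
and, since $\{x_i\}_{i=1}^N$ span the first spherical harmonics, $|a_1|\le C\|f\|_{L^2}^2\le C\delta\|f\|_{H^1}$, which can be absorbed on the left.

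Combining these inequalities yields
\[
\|f\|_{H^1}^2 \;\le\; C\,\|H_{\pa E_f}-(N-1)\|_{L^2}^2 + C\delta^2\,\|f\|_{H^1}^2 + C\delta^2\,\|f\|_{H^2}^2.
\]
To close the argument I would run an elliptic bootstrap on $-\Delta_{\pa B}f-(N-1)f=(H_{\pa E_f}-(N-1))-Q(f)$: second-order regularity on the sphere gives $\|f\|_{H^2}\le C(\|H_{\pa E_f}-(N-1)\|_{L^2}+\|f\|_{H^1})$ after absorbing $C\delta\|f\|_{H^2}$, and choosing $\delta$ small enough to absorb the remaining $\delta^2\|f\|_{H^1}^2$ term yields the claim. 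The main obstacle I expect is precisely the sharpness of the remainder bound on $Q(f)$: the naive quadratic estimate $\|Q(f)\|_{L^2}\le C\|f\|_{H^2}^2$ would be useless for closing the argument, and obtaining the asymmetric form $\|f\|_{C^1}\|f\|_{H^2}$ requires a careful algebraic analysis of the first and second fundamental forms of $\pa E_f$ to identify which nonlinear terms genuinely involve a second derivative of $f$ and must therefore be paired with a $C^1$-small factor.
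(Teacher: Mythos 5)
The paper does not prove this statement: it is cited verbatim from Krummel--Maggi \cite{KM} (Theorem~1.10 there), and the only discussion the paper offers is the one-line Remark that the $C^{1,1}$ hypothesis of \cite{KM} can be relaxed to $C^1\cap H^2$ by approximation. Your proposal, by contrast, is a self-contained proof, so the comparison is not ``same route vs.\ different route'' within the paper but rather ``black-box citation vs.\ direct argument.''

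That said, your argument is sound and is essentially the expected Fuglede-type proof of the nearly-spherical case. The key ingredients are all correctly in place: the linearisation $H_{\pa E_f}-(N-1)=-\Delta_{\pa B}f-(N-1)f+Q(f)$ with the Jacobi operator of the unit sphere; the spectral trichotomy $\mu_0=-(N-1)\ne0$, $\mu_1=0$, $\mu_k^2\gtrsim 1+\lambda_k$ for $k\ge2$; the observation that the barycentre constraint gives $\bigl|\int_{\pa B}fx\,d\H^{N-1}\bigr|\le C\|f\|_{L^2}^2$ and hence controls the translational modes up to an absorbable $O(\delta)\|f\|_{H^1}$ error; the asymmetric remainder bound $\|Q(f)\|_{L^2}\le C\|f\|_{C^1}\|f\|_{H^2}$, which follows because every quadratic term in the expansion carries at most one factor of $\nabla^2f$ (this is indeed the point you correctly flag as the sensitive one); and the elliptic bootstrap to recover $\|f\|_{H^2}\lesssim\|H_{\pa E_f}-(N-1)\|_{L^2}+\|f\|_{L^2}$ after absorption. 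Note also that your argument works directly at the regularity $f\in C^1\cap H^2$ required by the paper, sidestepping the approximation step the paper's Remark invokes to pass from the $C^{1,1}$ statement in \cite{KM}. The only caveat worth recording is that \cite{KM}'s Theorem~1.10 is proved there in a more robust framework (not tied to the radial-graph parametrisation), which is what justifies its use beyond the strictly perturbative regime; your proof recovers exactly the perturbative version that this paper actually needs, which is all that is required here.
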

\begin{remark}
In fact, in \cite{KM} the theorem is stated under the assumption $f\in C^{1,1}(\pa B)$. However, it is immediate to see by a standard approximation argument that the statement holds true under the weaker hypothesis $f \in C^1(\partial B) \cap H^2(\partial B)$. 
\end{remark}

 The stability inequality for nearly spherical sets with almost constant mean curvature that we are going to use in the next sections is the following.

\begin{theorem}\label{Aleq}
There exist $\delta\in (0,\frac12)$ and $C>0$ with the following property: For  any  $f \in C^1(\partial B) \cap H^2(\partial B)$ such that   $\|f\|_{C^1}\le \delta$,   $|E_f|=\omega_N$ and $\bary(E_f)=0$,
  we have
  \beq\label{e:aleq}
\|f\|_{H^1(\partial B)}\le C \| H_{\partial E_f} - \overline H_{\partial E_f} \|_{L^2(\partial B)} \,,
\eeq
where we have set
$$
\overline H_{\partial E_f} : = \medint_{\partial B}H_{\partial E_f}(x+f(x)x)\, d\H^{N-1}\,.
$$
\end{theorem}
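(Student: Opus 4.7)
My strategy is to reduce Theorem~\ref{Aleq} to Theorem~\ref{th:KM} by absorbing the ``wrong'' constant $(N-1)$ into the average $\overline H_{\partial E_f}$, at the cost of a harmless quadratic remainder that can be absorbed into the left-hand side for $\delta$ small.

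First I would set $\lambda:=\overline H_{\partial E_f}$ and observe that, by the very definition of $\lambda$, the function $H_{\partial E_f}(x+f(x)x)-\lambda$ has zero mean on $\partial B$ with respect to $\mathcal H^{N-1}$. Hence the Pythagorean decomposition
\[
\|H_{\partial E_f}-(N-1)\|_{L^2(\partial B)}^2
= \|H_{\partial E_f}-\lambda\|_{L^2(\partial B)}^2 + (\lambda-(N-1))^2\,\mathcal H^{N-1}(\partial B)
\]
holds. Combining this with Theorem~\ref{th:KM}, the conclusion \eqref{e:aleq} will follow once I prove
\[
|\lambda-(N-1)|\le C\,\|f\|_{H^1(\partial B)}^2,
\]
because under the assumption $\|f\|_{C^1}\le\delta$ this yields $(\lambda-(N-1))^2\le C\delta^2\|f\|_{H^1}^2$, which is absorbed on the left after choosing $\delta$ small.

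To prove the estimate on $|\lambda-(N-1)|$, I would use the standard expansion of the mean curvature of a graph over the sphere: for $\|f\|_{C^1}\le\delta$ with $\delta$ small and $f\in H^2(\partial B)$,
\[
H_{\partial E_f}(x+f(x)x)= (N-1) - (N-1)f(x) - \Delta_{\partial B}f(x) + Q(x),
\]
where $Q$ is the second-order remainder, depending smoothly on $f,\nabla f$ (bounded by $\delta$) and linearly on $\nabla^2 f$. Integrating over $\partial B$ one has $\int_{\partial B}\Delta_{\partial B}f=0$, so
\[
\lambda-(N-1) = -\frac{N-1}{\mathcal H^{N-1}(\partial B)}\int_{\partial B} f\,d\mathcal H^{N-1} + \frac{1}{\mathcal H^{N-1}(\partial B)}\int_{\partial B}Q\,d\mathcal H^{N-1}.
\]
The volume constraint $|E_f|=\omega_N$ gives $\int_{\partial B}\big((1+f)^N-1\big)\,d\mathcal H^{N-1}=0$, whence
\[
\Big|\int_{\partial B}f\,d\mathcal H^{N-1}\Big|\le C\|f\|_{L^2(\partial B)}^2,
\]
by Taylor expansion using $\|f\|_{L^\infty}\le\delta<1/2$. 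For the quadratic term, the dangerous contributions involving second derivatives appear only through combinations such as $f\,\Delta_{\partial B}f$ or $\nabla f\cdot\nabla^2 f\cdot\nabla f$; integrating by parts on the closed manifold $\partial B$ (and using $\|f\|_{C^1}\le\delta$ to bound pointwise the smooth coefficients) yields $|\int_{\partial B}Q|\le C\|f\|_{H^1(\partial B)}^2$.

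The main technical point, and essentially the only subtlety, is the justification of this integration-by-parts reduction of $\int Q$ to an $H^1$ quantity: a brute force bound would give $\|Q\|_{L^1}\lesssim\delta\|f\|_{H^2}^2$, which is too weak to close the argument. One has to use the explicit structure of the mean curvature expansion and integrate by parts the second-derivative terms before estimating, exploiting that $\partial B$ has no boundary. Once this is in place, combining the three ingredients gives $|\lambda-(N-1)|\le C\|f\|_{H^1}^2\le C\delta\|f\|_{H^1}$, which plugged into the Pythagorean decomposition above and into Theorem~\ref{th:KM} yields
\[
\|f\|_{H^1(\partial B)}^2 \le C\|H_{\partial E_f}-\lambda\|_{L^2(\partial B)}^2 + C\delta^2\|f\|_{H^1(\partial B)}^2,
\]
and choosing $\delta$ so small that $C\delta^2\le 1/2$ concludes the proof.
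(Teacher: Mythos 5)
Your high-level plan is sound and genuinely different from the paper's: you keep the normalization $|E_f|=\omega_N$, apply Theorem~\ref{th:KM} to $f$ itself, and absorb the discrepancy $\overline H_{\partial E_f}-(N-1)$ via the Pythagorean decomposition. The paper instead dilates $E_f$ to $E_u=\kappa E_f$ with $\kappa$ tuned so that $\overline H_{\partial E_u}=N-1$ exactly, applies Theorem~\ref{th:KM} to $u$, and converts back; this only requires the \emph{qualitative} smallness of $\overline H_{\partial E_f}-(N-1)$ and thus sidesteps the quantitative estimate on which your argument hinges.

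That quantitative estimate is precisely where your writeup has a gap. You claim $|\int_{\partial B}Q|\le C\|f\|_{H^1}^2$ by integrating by parts the $\nabla^2 f$ terms, but this is not clearly achievable: the coefficients multiplying $\nabla^2 f$ in $Q$ depend nonlinearly on $\nabla f$, so each integration by parts regenerates $\nabla^2 f$ (multiplied by an extra power of $\delta$), and the iteration does not terminate with a bound controlled by $\|f\|_{H^1}$ alone. What \emph{does} follow, cleanly from the divergence structure of the first variation (i.e. writing $H_{\partial E_f}J$ with $J$ the area Jacobian as $a(f,\nabla f)-\operatorname{div}_{\partial B}V(f,\nabla f)$ and taking $\psi=1$), combined with the volume constraint $|\int_{\partial B}f|\le C\|f\|_{L^2}^2$, is the weaker bound
\[
\bigl|\overline H_{\partial E_f}-(N-1)\bigr|\ \le\ C\|f\|_{H^1(\partial B)}^2+C\|f\|_{H^1(\partial B)}\,\|H_{\partial E_f}-\overline H_{\partial E_f}\|_{L^2(\partial B)}\,,
\]
the cross-term being unavoidable because $H_{\partial E_f}$ is only $L^2$-controlled (not $L^\infty$-controlled) under $\|f\|_{C^1}\le\delta$, $f\in H^2$. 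Fortunately your plan survives: plugging this weaker bound into the Pythagorean identity gives, using $\|f\|_{H^1}\le C\delta$,
\[
\|f\|_{H^1}^2\le C\|H_{\partial E_f}-\overline H_{\partial E_f}\|_2^2+C'\delta^2\|f\|_{H^1}^2+C'\delta^2\|H_{\partial E_f}-\overline H_{\partial E_f}\|_2^2\,,
\]
and absorption for $\delta$ small still closes the argument. So I would keep your Pythagorean strategy but replace the claimed pure quadratic estimate with the mixed one above, which is the one that can actually be justified.
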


\begin{proof}
First of all we notice that, if we take the constant $C$ in \eqref{e:aleq} to be bigger than $\sqrt{\frac{N}2\omega_{N}}$, then it is enough to consider only the case
$\|H_{\partial E_f} - \overline H_{\partial E_f}\|_2\leq 1$.

We write explicit formulas for the perimeter and its first variations for the sets $E_f$: by using the area formula we get
\beq\label{eq:per}
\P(E_f) = \int_{\partial B} (1+f)^{N-1} (1 + (1+f)^{-2} |\nabla f|^2)^{\frac 12} \, d\H^{N-1}\,.
\eeq
Recall that $H_{\partial E_f} $ is the first variation of the perimeter at $E_f$, that is, 
\beq \label{e:var1-1}
\begin{split}
\delta \P(E_f) [\psi]  := &\frac{d}{dt}\P(E_{f+t\psi}) = \int_{\partial E}  (H_{\partial E_f} \nu_{\partial E_f}\big) \cdot x\, \psi(x) \, d\mathcal{H}^{N-1}(x)\\
&= \int_{\partial B}   (H_{\partial E_f} \nu_{\partial E_f}\big)(p) \cdot x
\;\psi  (1+f)^{N-1}   (1 + (1+f)^{-2} |\nabla f|^2)^{\frac 12}\, d\H^{N-1}\,,
\end{split}
\eeq
where for simplicity of notation we have set $p=(1+f(x))x$. On the other hand, by  differentiating under the integral sign, from \eqref{eq:per} we get
\beq \label{e:var1-2}
\begin{split}
\delta \P(E_f) [\psi]
& =\int_{\partial B}  (N-1) (1+f)^{N-2} (1 + (1+f)^{-2} |\nabla f|^2)^{\frac 12} \psi \, d\H^{N-1}
\\
&\qquad-\int_{\partial B} \frac{(1+f)^{N-4}}{(1 + (1+f)^{-2} |\nabla f|^2)^{\frac 12}} |\nabla f|^2 \psi\, d\H^{N-1}\\
&\qquad+ \int_{\partial B} \frac{(1+f)^{N-3}}{(1 + (1+f)^{-2} |\nabla f|^2)^{\frac 12}}  \nabla f \cdot \nabla \psi\, d\H^{N-1},
\end{split}
\eeq
for all $\psi \in C^1(\partial B)$, where $\nabla f$ and $\nabla \psi$ are the tangent gradient of $f$ and $\psi$ on $\partial B$, respectively.

In the following, with a slight abuse of notation,  by the symbol $O(g)$ we mean  any function $h$ of the form $h(x)=r(x)g(x)$, where $|r(x)|\leq C$ for all $x\in \pa B$, with $C>0$ being a constant depending only on the apriori $C^1$-bound $\|f\|_{C^1}\leq\frac12$. 

Since the normal to $E_f$ at a point $p=(1+f(x))x$ with $x\in \partial B$ is given by
\[
\nu_{{\partial E_f}} (p) = \frac{1}{\sqrt{1+(1+f)^{-2}|\nabla f|^2}}\left(-\frac{\nabla f}{1+f} +  x\right),
\]
one gets
 \beq\label{onegets}
 \nu_{{\partial E_f}} (p) \cdot x = \frac{1}{\sqrt{1+(1+f)^{-2}|\nabla f|^2}}.
 \eeq
Therefore, by \eqref{e:var1-1}, \eqref{onegets}
and a simple Taylor expansion, we have that
\begin{align}\label{e:lin1}
\delta  \P(E_f) [\psi] 
&=  \int_{\partial B } (1+R_1) \,H_{\partial E_f}(p) \Big (1+ (N-1) f + R_2 \Big )   \psi  \, d\H^{N-1},
\end{align}
with $|R_1|= O(|\nabla f|^2)$ and $|R_2|= O(|f|^2)$.
Similarly, using \eqref{e:var1-2} and a Taylor expansion, we get 
\begin{align}\label{e:lin2}
\delta  \P(E_f) [\psi] &= \int_{\partial B} \big(
(N-1) + (N-1)(N-2) f\,+ O(|f|^2) + O(|\nabla f|^2)\big)\psi\, d\H^{N-1}\notag\\
&\quad + \int_{\partial B} \left(\nabla f + h\right)\cdot
\nabla \psi \, d\H^{N-1},
\end{align}
where $h$ is a vector field satisfying 
\beq\label{bacca}
|h|\leq C (|f|+|\nabla f|^2)|\nabla f|\,,
\eeq
with $C>0$ depending only  on the apriori $C^1$-bound $\|f\|_{C^1}\leq \frac12$.
By comparing \eqref{e:lin1} and \eqref{e:lin2}, and recalling that 
$|R_2|= O(|f|^2)$, we infer that
\begin{align}\label{e:lin3}
&\int_{\partial B} \left(-(N-1)f\psi + \nabla f\cdot \nabla \psi\right)\, d\H^{N-1}\notag\\
&= \int_{\partial B} (1+R_1)\big( H_{\partial E_f}(p) - (N-1)\big)(1+(N-1)f+R_2)\psi\, d\H^{N-1}\notag\\
&+\int_{\partial B} \big(-h\cdot \nabla \psi + \big(O(|f|^2) + O(|\nabla f|^2)\big)\psi\big) \, d\H^{N-1}.
\end{align}
Observe that testing \eqref{e:lin3} with $\psi = 1$ and setting 
$R_3= (1+R_1)(1+(N-1)f+R_2)-1$,
we get $R_3 = O(|f|) + O(|\nabla f|^2)$ and 
\begin{equation}\label{e:lin4}
\int_{\partial B} (1+R_3)\big( H_{\partial E_f}(p) - (N-1)\big)\, d\H^{N-1}
=
 \int_{\partial B} \left(O(|f|) + O(|\nabla f|^2)\right)\, d\H^{N-1}\,.
\end{equation}
In particular, for $F :=(1+R_3) H_{\pa E_f}$ and recalling that the overline denotes the average on $\partial B$, we infer that for every $\eta\in (0,\frac12)$,
if $\de$ is sufficiently small, we have that
\[
\left\vert\overline F-(N-1)\medint_{\pa B} (R_3+1)\right\vert \leq \eta
\]
and, always assuming $\delta$ small enough,
\begin{align}
|\overline H_{\partial E_f}-(N-1)|&\leq |\overline H_{\partial E_f}-\overline F| + \left\vert\overline F-(N-1)\medint_{\pa B} (R_3+1)\right\vert+(N-1)\left\vert\medint_{\pa B} R_3\right\vert\notag\\
&\leq\left\vert \medint_{\pa B} R_3  H_{\partial E_f} \right\vert + 2\eta\leq\notag\\
&\leq \omega_N^{-1}\|R_3\|_2 \|H_{\partial E_f} - \overline H_{\partial E_f}\|_2 
+  |\overline H_{\partial E_f}-(N-1)| \int_{\pa B}\left\vert R_3  \right\vert\, d\H^{N-1}+\notag\\
&\qquad + (N-1)\int_{\pa B}\left\vert R_3  \right\vert\, d\H^{N-1}+2\eta\notag\\
&\leq \eta \|H_{\partial E_f} - \overline H_{\partial E_f}\|_2 + \eta |\overline H_{\partial E_f}-(N-1)| + 3\eta\notag\\
&\leq \eta |\overline H_{\partial E_f}-(N-1)| + 4\eta,\notag
\end{align}
where in the last inequality we have used $\|H_{\partial E_f} - \overline H_{\partial E_f}\|_2\leq 1$.
In particuar, there exists a constant $\lambda$ such that
\begin{equation}\label{e:vicinanza media}
\overline H_{\partial E_f}=N-1 + \lambda,\qquad
|\lambda|\leq \frac{4\eta}{1-\eta}.
\end{equation}
Note that $\lambda$ is arbitralily small, if $\delta$ (and hence $\eta$) is chosen accordingly.

Now the proof can be concluded as follows. Consider $\kappa = 1+\frac{\lambda}{N-1}$.
If $\delta$ is small enough, then $\kappa\in (\frac12, 2)$.
Consider also the set  $\kappa E_{f}= E_{u}$ with $u:=\kappa-1+\kappa f$.
Then, 
\[
\overline H_{E_u} = N-1, \qquad \|u\|_{H^1}\leq  \kappa \|f\|_{H^1}+ \sqrt{N\omega_N}\frac{\lambda}{N-1}, \text{ and } \|u\|_{C^1}\leq \kappa \|f\|_{C^1}+ \frac{\lambda}{N-1}\,.
\]
In particular, if $\de$ and  $\eta$ are sufficiently small (and, therefore, such is $\lambda$), we are in position to apply Theorem~\ref{th:KM} (because $\|u\|_{C^1}$ becomes arbitrarily small)
and  infer that
\[
\|u\|_{H^1}\leq C \|H_{E_u} - N+1\|_{L^2} = \kappa^{-1} \,C\|H_{E_f} - \overline H_{E_f}\|_{L^2}
\leq 2C\|H_{E_f} - \overline H_{E_f}\|_{L^2}.
\]
In particular, we can estimate the $L^2$ norm of the gradient of $f$:
\beq\label{eq:gradiente}
\|\nabla f\|_{L^2}\leq \kappa^{-1}\|u\|_{H^1}\leq 4C\|H_{E_f} - \overline H_{E_f}\|_{L^2}.
\eeq
Finally, since $|E_f|=|B|$, i.e.,
\[
\frac{1}{N}\int_{\partial B}(1+f)^N\,d\H^{N-1} = \omega_N,
\]
we get by Taylor expansion
\begin{equation}\label{e:lin6}
\big\vert\int_{\partial B} f\,d\H^{N-1}\big\vert = \int_{\partial B} O(|f|^2) \,d\H^{N-1}.
\end{equation}
This implies that
\begin{equation}\label{e:media}
|\overline f| \leq C\, \|f\|_{L^2}^2\leq C\delta \, \|f\|_{L^2}.
\end{equation}
By Poincar\'e inequality we have that 
\begin{align}\label{e:norma 2}
\|f\|_{L^2} & \leq 2\|f - \overline f\|_{L^2} + 2|\overline f| \big(N\omega_N\big)^{\frac12}\leq
C\|\nabla f\|_{L^2}+ 2|\overline f| \big(N\omega_N\big)^{\frac12}.
\end{align}
Inserting \eqref{e:media} in \eqref{e:norma 2} and combining with \eqref{eq:gradiente} we deduce \eqref{e:aleq} if $\delta$ is sufficiently small.

\end{proof}

\begin{remark}\label{rm:acdc}
By a simple rescaling argument it is clear that
Theorem~\ref{Aleq} holds also for the sets $E_{\f,m}$ parametrized over a ball $B^{(m)}$ with volume $m$, with constants $\delta$ and $C$ depending on $m$.
Clearly, the dependence of such constants on $m$ can be made uniform when $m$ varies in compact subsets of $(0,+\infty)$.
\end{remark}

\begin{remark}
The estimate \eqref{e:aleq} is optimal for what concerns the power of the norms.
To see this, it is enough to consider sets $E_{\varepsilon f}$ with $f$ a functions in the second eigenspace of the Laplace-Beltrami operator on the sphere (with corresponding eigenvalue $2N$) and $\varepsilon>0$ sufficiently small: then, computing the $L^2$-norm of the average of the mean curvature yields
\[
\| H_{\partial E_{\varepsilon f}} - \overline H_{\partial E_{\varepsilon f}} \|_{L^2(\partial B)}\leq 
C  \|\varepsilon f\|_{L^2(\partial B)},
\]
 for a dimensional constant $C<0$.
\end{remark}



\section{The incremental problem}


We start by introducing the incremental minimum problem which defines the discrite-in-time volume preserving mean curvature flow. To this purpose,  let $E\neq \emptyset$ be a bounded measurable set. Notice that the topological boundary of $E$ depends on its representative; from now on, we will assume that $E$ coincides with its Lebesgue representative, i.e., with its points of density equal to one.  We let
\begin{equation}\label{defsigndist}
d_E(x)\ =\ \dist(x,E)-\dist(x,\R^N\setminus E)
\end{equation}
be the signed distance function to $\partial E$. 
Let ${m}>0$ be fixed, representing the volume of the evolving set. 

Fix a time step $h>0$ and consider the problem
\begin{equation}\label{varprob}
\min \left\{P(F)\ +\ \frac{1}{h}\int_F d_E(x)\,dx:\, | F|={m} \right\}.
\end{equation}

Note that 
$$
\int_Fd_E(x)\, dx-\int_E d_E(x)\, dx=\int_{E\Delta F}\mathrm{dist}(x, \partial E)\, dx
$$
so that \eqref{varprob} is equivalent to 
$$
\min \left\{P(F)\ +\ \frac{1}{h}\int_{E\Delta F}\mathrm{dist}(x, \partial E)\, dx
:\, |F|={m} \right\}.
$$
Given two sets $E,\, F$, we let
$$
\D (F,E):=  \int_{E\Delta F}\mathrm{dist}(x, \partial E)\, dx\,.
$$
In order to to prove the existence of a solution to \eqref{varprob}, we need some preliminary results. 
We start with the following non-vanishing estimate for sets of finite perimeter  and finite measure.

\begin{lemma}\label{lm:nonvanishing}
There exists a constant $C=C(N)\in (0, \frac12)$ such that if $E\subset\R^N$ is a set of finite perimeter and finite measure, then, setting $Q:=(0,1)^N$, we have 
$$
\sup_{z\in \Z^N}|E\cap (z+Q)|\geq c(N)\min\Bigl\{\Big(\frac{|E|}{P(E)}\Big)^N, 1\Bigr\}\,.
$$ 
\end{lemma}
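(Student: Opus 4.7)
The plan is a classical covering/relative-isoperimetric argument. Set $\alpha:=\sup_{z\in\Z^N}|E\cap(z+Q)|$.

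First I would dispose of the easy case $\alpha\geq\frac12$: since $\min\{(|E|/P(E))^N,1\}\leq 1$, any choice of constant $c(N)\leq\frac12$ makes the inequality trivial. So from now on I assume $\alpha<\frac12$, which in particular means $|E\cap(z+Q)|\leq\frac12=\frac12|Q|$ for every $z\in\Z^N$.

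The key step is to apply the relative isoperimetric inequality on each translated unit open cube $\inter{z+Q}$: there is a dimensional constant $C_N$ such that for every $z\in\Z^N$,
\begin{equation*}
|E\cap(z+Q)|^{\frac{N-1}{N}}\leq C_N\,\mathcal{H}^{N-1}\big(\partial^*E\cap \inter{z+Q}\big).
\end{equation*}
Summing over $z\in\Z^N$, and using that the interiors $\inter{z+Q}$ are pairwise disjoint and that $\mathbb{R}^N\setminus\bigcup_z\inter{z+Q}$ is a countable union of hyperplanes, the sum on the right is bounded above by $\mathcal{H}^{N-1}(\partial^*E)=P(E)$. (The possible $\mathcal{H}^{N-1}$-mass of $\partial^*E$ lying on the grid $\partial(z+Q)$ is simply discarded, which is fine for an upper bound.)

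To finish, I would compare the two sums $\sum_z|E\cap(z+Q)|=|E|$ and $\sum_z|E\cap(z+Q)|^{(N-1)/N}\leq C_N P(E)$. Writing $a_z:=|E\cap(z+Q)|\leq\alpha$ and using $a_z^{(N-1)/N}=a_z\,a_z^{-1/N}\geq a_z\,\alpha^{-1/N}$, I get
\begin{equation*}
\alpha^{-1/N}\,|E|\;=\;\sum_z a_z\,\alpha^{-1/N}\;\leq\;\sum_z a_z^{\frac{N-1}{N}}\;\leq\;C_N\,P(E),
\end{equation*}
which rearranges to $\alpha\geq C_N^{-N}(|E|/P(E))^N$. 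Combined with the first case, the lemma holds with $c(N):=\min\{\tfrac12,C_N^{-N}\}\in(0,\tfrac12)$.

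The only mildly subtle point is the bookkeeping for $\mathcal{H}^{N-1}(\partial^*E\cap\partial(z+Q))$ when the reduced boundary happens to lie on the coordinate grid; this is handled cleanly by using the open cubes in the relative isoperimetric inequality, as above. Alternatively, one can perturb the lattice by a small translation $\tau\in Q$, observe that the set of $\tau$'s for which $\mathcal{H}^{N-1}(\partial^*E\cap\partial(\tau+z+Q))>0$ for some $z$ has Lebesgue measure zero by Fubini (since $P(E)<\infty$), and apply the argument to a translated lattice, noting that the estimate does not depend on the translation.
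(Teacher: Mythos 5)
Your proposal is correct and follows essentially the same argument as the paper: reduce to the case $\sup_z|E\cap(z+Q)|\leq\frac12$, apply the relative isoperimetric inequality on each unit cube, sum over the lattice, and use $a_z^{(N-1)/N}\geq a_z\alpha^{-1/N}$ to compare with $|E|=\sum_z a_z$. Your handling of the grid-boundary measure (via open cubes or a generic translation) is a small bookkeeping refinement, but the core idea is identical.
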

\begin{proof}
Settting
$$
\beta:=\sup_{z\in \Z^N}|E\cap (z+Q)|\,,
$$
assume that $\beta\leq \frac12$. Then, 
 by the Relative Isoperimetric Inequality we have
\begin{align*}
P(E)& \geq \sum_{z\in \Z^N}P(E, z+Q)\geq c(N)\sum_{z\in \Z^N}|E\cap (z+Q)|^{\frac{N-1}{N}}\\
&\geq \frac{c(N)}{\beta^{\frac1N}}\sum_{z\in \Z^N}|E\cap (z+Q)|=
\frac{c(N)}{\beta^{\frac1N}} |E|
\end{align*}
and the conclusion easily follows.
\end{proof}

We are now in a position to prove the following proposition, which in particular establishes the existence of a solution to  \eqref{varprob}.
The crucial point in the following statement is that the choice of the penalization parameter $\Lambda$ depends only on the bounds on the perimeter and on the prescribed measure $m$, and thus can be made uniform along the minimizing movements scheme.
\begin{proposition}\label{prop:penal}
Given $m$, $M>0$, there exists $\Lambda_0=\Lambda_0(m, M, h, N)>0$  such that  for any bounded set $E$ of finite perimeter, with $P(E)\leq  M$ and $ |E|=m$, and for any 
$\Lambda\geq\Lambda_0$, any solution  solution $\overline F$ of
\begin{equation}\label{varprob-pen}
\min \left\{P(F)\ +\frac1h\D(F, E)+\Lambda\big||F|-m\big|:\, F\subset\R^N\textrm{ \em  measurable}\right\}
\end{equation}
satisfies the volume constraint $|\overline F|=m$. In particular, \eqref{varprob} and \eqref{varprob-pen} are equivalent. 
\end{proposition}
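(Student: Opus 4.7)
The plan is to argue by contradiction: suppose $\overline F$ solves \eqref{varprob-pen} with $|\overline F| \neq m$. Without loss of generality $|\overline F| > m$ (the other case being symmetric, by adding a small ball in a suitable region instead of removing mass), and set $\delta := |\overline F| - m > 0$. I would construct a competitor $F'$ with $|F'| = m$ such that
\[
P(F') + \D(F', E)/h \leq P(\overline F) + \D(\overline F, E)/h + C(m, M, h, N)\,\delta.
\]
Minimality of $\overline F$ then forces $\Lambda\delta \leq C\delta$, so for $\Lambda \geq \Lambda_0 > C$ this contradicts $\delta > 0$.

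Testing the minimality of $\overline F$ against $E$ itself (for which $\D(E, E) = 0$ and $|E| = m$) immediately gives $P(\overline F) \leq M$, $\D(\overline F, E) \leq Mh$, and $\delta \leq M/\Lambda$; in particular, after enlarging $\Lambda_0$ if needed, one may assume $|\overline F| \in [m/2, 3m/2]$. Next comes a localization of $\overline F$: since the open ball of radius $\dist(x, \partial E)$ around any $x \in E$ fits inside $E$, one has $\dist(x, \partial E) \leq r(m) = (m/\omega_N)^{1/N}$ on $E$, and hence every $x \in \overline F$ with $\dist(x, \partial E) \geq R > r(m)$ must lie in $\overline F \setminus E$, yielding
\[
|\overline F \cap \{\dist(\cdot, \partial E) \geq R\}| \leq Mh/R \qquad \text{for all } R > r(m).
\]
Applying Lemma \ref{lm:nonvanishing} to $\overline F$ produces a unit cube $z + Q$ with $|\overline F \cap (z+Q)| \geq \alpha(m, M, N) > 0$; a short case analysis according to whether $z+Q$ sits inside, outside, or across $\partial E$, together with the localization above, shows $\sup_{z+Q} \dist(\cdot, \partial E) \leq D_0(m, M, h, N)$ — crucially, in the outside case the cube must be within $Mh/\alpha$ of $\partial E$, or else its mass of $\overline F$ would overshoot the dissipation budget.

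For the competitor itself, I would use a small rescaling around a point of the good cube: fix $x_0 \in z+Q$, set $\lambda := (m/|\overline F|)^{1/N}$, and let $F' := x_0 + \lambda(\overline F - x_0)$. Then $|F'| = m$ and $|P(F') - P(\overline F)| = |\lambda^{N-1} - 1|\, P(\overline F) = O(M\delta/m)$. By change of variables and the 1-Lipschitz continuity of $d_E$,
\[
|\D(F', E) - \D(\overline F, E)| \leq C|\lambda - 1|\Big(\int_{\overline F} |d_E|\,dx + \int_{\overline F}|x - x_0|\,dx\Big),
\]
where the first integral is bounded by $Mh + m\, r(m)$, using $\D(\overline F, E) \leq Mh$ and the inner-radius estimate for $E$.

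The main obstacle will be the uniform bound on $\int_{\overline F}|x - x_0|\,dx$: if $\overline F$ (or $E$) has far-apart components, a single cube $z+Q$ does not suffice to confine $\overline F$, and the second moment could a priori blow up. The fix is to iterate the non-vanishing lemma to extract a bounded family of unit cubes $\{z_i + Q\}$ whose union captures all but a controlled mass of $\overline F$ (each cube contributing at least $\alpha$ of new mass, so the procedure stops after at most $2m/\alpha$ steps), and then to perform the rescaling separately around the centers of these cubes (equivalently, via a partition-of-unity deformation). Each piece is then confined to a fixed cube, so the resulting $F'$ satisfies $|F'| = m$, a perimeter change of order $M\delta/m$, and via the Lipschitz bound on $d_E$ a dissipation change bounded by $C(m, M, h, N)\,\delta$, closing the argument.
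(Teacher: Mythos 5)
Your opening moves match the paper: test minimality against $E$ itself to get $P(\overline F)\le M$, $\D(\overline F,E)\le Mh$ and $\delta=\big||\overline F|-m\big|\le M/\Lambda$, and invoke Lemma~\ref{lm:nonvanishing} to find a unit cube carrying a definite fraction of $\overline F$'s mass. The divergence is in the competitor. You propose the \emph{global} rescaling $F'=x_0+\lambda(\overline F-x_0)$, and you correctly put your finger on the obstruction: the dissipation change produces the term $|\lambda-1|\int_{\overline F}|x-x_0|\,dx$, and this second moment has no uniform bound in terms of $m,M,h,N$ alone — e.g.\ for $E$ two balls of volume $m/2$ at mutual distance $R$, $P(E)$ stays bounded while the moment grows linearly in $R$. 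So the global rescaling cannot be repaired by a better choice of $x_0$.

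The proposed fix, however, has a genuine gap. Extracting several cubes that carry most of the mass does not allow you to ``rescale separately around the centers of these cubes'': if $\overline F$ is a single connected set threading several far-apart cubes, applying a different affine contraction inside each cube tears the set, creating new boundary of size not controlled by $\delta$, and pasting the pieces back is ill-defined. The ``partition-of-unity deformation'' you gesture at would have to be a single Lipschitz map close to the identity and supported near the cubes, with quantitative control on both the perimeter variation and on the induced volume change; none of this is spelled out, and in particular it is not clear how to allocate the volume deficit $\delta$ among the cubes when the local geometry of $\overline F$ near each center is unknown. The paper avoids all of this by using a \emph{local} volume-fixing diffeomorphism \`a la Esposito--Fusco (\cite{EF}): a bilipschitz radial dilation $\Phi_\sigma$ equal to the identity outside a single fixed ball $B_r(x_0)$, built so that it perturbs the perimeter by $O(M\sigma)$, moves points by $O(\sigma)$ (hence changes the dissipation by $O(\sigma)$, using the bound $\dist(\cdot,\partial E)\le C$ on $B_r$ forced by $\D\le Mh$), while \emph{increasing} the volume by at least $c\,\sigma r^N$. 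Because the map is the identity outside $B_r$, the unboundedness of $E$ and $\overline F$ is simply irrelevant; one ball suffices. The paper also sets the argument up as a compactness/contradiction proof along a sequence $\Lambda=n\to\infty$, which lets it pick the point $x_0$ and radius $r$ from the $L^1_{\rm loc}$-limit $F_\infty$ of the translated $\overline F_n$, whereas your direct formulation would require producing such a ball uniformly — another point where your argument is missing a step.
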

\begin{proof}
First of all, note that the existence of a solution to \eqref{varprob-pen} follows by standard arguments, see for instance \cite{MSS}.
We argue by contradiction by assuming for every $n$ the existence of a set $E_n$, with $|E_n|=m$ and 
$P(E_n)\leq M$,  and a set 
$$
\overline F_n\in \operatorname*{argmin}\left\{P(F)\ +\frac1h\D(F, E_n)+n\big||F|-m\big|:\, F\subset\R^N\text{ measurable} \right\}
$$
such that $|\ov F_n|\neq m$. In the sequel, we may assume $|\ov F_n|<m$, as the other case can be treated analogously. Testing with $E_n$, by minimality we have
\beq\label{trbounds}
P(\ov F_n)\ +\frac1h\D(\ov F_n, E_n)+n\big||\ov F_n|-m\big|\leq P(E_n)\leq M\,.
\eeq
In particular, 
\beq\label{plus}
|\ov F_n|\to m  
\eeq
as $n\to\infty$. In turn, by Lemma\til\ref{lm:nonvanishing} there exist a constant $c_0>0$, depending only on  $m$, $M$ and $N$, and $z_n\in \Z^N$ such that 
$$
|\ov F_n\cap (z_n+Q)|\geq c_0
$$
for all $n$ sufficiently large. Thus, by replacing $\ov F_n$, $E_n$ by $\ov F_n-z_n$, $E_n-z_n$, respectively,   and appealing to  the well-known campactness properties of sets of finite perimeter, we may assume that up to a (not relabelled) subsequence we have $\ov F_n\to F_\infty$ in $L^1_{loc}$, with $|F_\infty|\geq|F_\infty\cap Q|\geq c_0>0$. 

The idea now is to modify the sets $\ov F_n$ by applying a local dilation in order to impose the volume constraint. In this construction, we follow closely  \cite[Section\til2]{EF}.  We only outline the main steps. 

First of all, arguing as in  Step 1 of  \cite[Section\til2]{EF},  for  any fixed $y_0\in \partial^*F_\infty$ and  for any given small $\e>0$  we may find a radius  $r>0$ and a point $x_0$ in a small neighborhood of $y_0$   such that  
 $$
 | \ov F_n\cap B_{r/2}(x_0)|<\e r^N\,, \quad | \ov F_n\cap B_{r}(x_0)|>\frac{\omega_Nr^N}{2^{N+2}}
 $$
 for all $n$ sufficiently large. In the following, to simplify the notation we assume that $x_0=0$ and we write $B_r$ instead of $B_r(0)$. For a sequence $0<\sigma_n<1/2^N$ to be chosen, we introduce the following bilipschitz maps:
  $$
  \Phi_n(x):=
 \begin{cases}
 (1-\sigma_n(2^N-1))x & \text{if $|x|\leq \frac r2$,}\\
 x+\sigma_n\Bigl(1-\frac{r^N}{|x|^N}\Bigr)x & \text{$\frac r2\leq |x|<r$,}\\
 x & \text{$|x|\geq r$.}
 \end{cases}
 $$
 Setting $\widetilde F_n:=\Phi_n(\ov F_n)$, we have as in Step 3 of \cite[Section\til2]{EF}
 \beq\label{EFper}
 P(\ov F_n, {B_r})-P(\widetilde F_n, {B_r})\geq -2^NNP (\ov F_n, {B_r})\sigma_n\geq 
 -2^NN M\sigma_n\,.
 \eeq
 Moreover, as in Step 4  of  \cite[Section\til2]{EF} we have
$$
 |\widetilde F_n|-|\ov F_n|\geq \sigma_nr^N\Bigl[c\frac{\omega_N}{2^{N+2}}-\e(c+(2^N-1)N)\Bigr]
$$
 for a suitable constant $c$ depending only on the dimension $N$.
If we fix $\e$ so that the negative term  in the square bracket does not exceed half  the  positive one, then we have 
  \beq\label{EFvol}
   |\widetilde F_n|-|\ov F_n|\geq \sigma_nr^NC_1\,,
  \eeq
  with $C_1>0$ depending on $N$.
  In particular, from this inequality it is clear that we can choose $\sigma_n$ so that $|\widetilde F_n|=m$ for $n$ large; this implies  $\sigma_n\to 0$, thanks to \eqref{plus}.
  
Now, arguing as in \cite[Equations (2.12) and (2.13)]{AFM}, we obtain
\beq\label{difsim}
  |\widetilde F_n\Delta \ov F_n|\leq C_3\sigma_nP(\ov F_n, B_r)\leq C_3\sigma_n M\,.
\eeq
Set now 
$$
i_n:=\min_{\ov B_r}\dist(\cdot, \partial E_n)
$$
and note that if $i_n>0$, then either $B_r\subset E_n$ or $B_r\subset E_n^c$. In the first case, we have
$(E_n\Delta \ov F_n)\cap B_r=B_r\setminus \ov F_n$ and, in turn, 
$$
|B_r\setminus \ov F_n|\geq |B_{r/2}\setminus \ov F_n|=|B_{r/2}|-| \ov F_n\cap B_{r/2}|>\Big(\frac{\omega_N}{2^N}-\e\Big) r^N\geq \frac{\omega_N}{4^N}r^N\,,
$$
by choosing $\e$ smaller if needed.  
In turn, recalling \eqref{trbounds}, we may estimate
$$
M\geq \frac1h\D(\ov F_n, E_n)\geq \frac1h \int_{B_r\setminus \ov F_n}\dist(x, \partial E_n)\, dx\geq
\frac{i_n}h\frac{\omega_N}{4^N}r^N\,,
$$
from which we easily deduce 
$$
\dist(\cdot, \partial E_n)\leq 4^{N}r^{-N}M \omega_N^{-1}h+2r \quad\text{on }B_r\,. 
$$
Arguing in a similar way also in the case $B_r\subset E_n^c$, we can finally conclude the existence
 of a positive constant $C_4=C_4(r)$ (depending also on $N$ and $h$) such that, in all cases, and for $n$ large enough we have
 $\dist(\cdot, \partial E_n)\leq h C_4$  on $B_r\,$ and thus 
 \beq\label{Vn1000}
\Big|\frac1h\D(\widetilde F_n, E_n)-\frac1h\D(\ov F_n, E_n)\Big|\leq C_4  |\widetilde F_n\Delta \ov F_n|\leq 
C_4 C_3\sigma_n M\,,
\eeq
where in last inequality we have used \eqref{difsim}.
 Combining \eqref{EFper}, \eqref{EFvol},  and \eqref{Vn1000},  we conclude that for $n$ sufficiently large
 \begin{align*}
 P(\widetilde F_n)+\frac1h\D(\widetilde F_n, E_n)\leq &  P(\ov F_n)+\frac1h\D(\ov F_n, E_n)+n\big||\ov F_n|-m\big|
 \\ &+
 \sigma_n\bigl[(2^NN+ C_4C_3)M- n r^NC_1\bigr]
 \\
 <& P(\ov F_n)+\frac1h\D(\ov F_n, E_n)+n\big||\ov F_n|-m\big| \,,
\end{align*}
a contradiction to the minimality of $\ov F_n$, since  $|\widetilde F_n|=m$. 
 \end{proof}

As a consequence of the previous proposition, together with some standard arguments from the regularity theory of  almost minimal sets, we have the following:

\begin{proposition}[Regularity properties of minimizers]\label{lm:density}
	Let $E$ be a bounded set with $|E|=m$ and $P(E)\leq M$ for some $m,\, M>0$. 
	Then, any solution $F\subset \R^N$ to \eqref{varprob} satisfies the following regularity properties:
	\begin{itemize}	
	\item[i)] There exist  $c_0=c_0(N)>0$ and   a radius $r_0=r_0(m, M, h, N)>0$ such that  
		for every $x\in \partial^{*} F$ and $r\in (0, r_0]$ we have
	\beq\label{eq:density}
	|B_r(x)\cap F|\geq c_0r^N\qquad\text{and}\qquad |B_r(x)\setminus F|\geq c_0r^N\,.
	\eeq
In particular, $F$ admits an open representative  whose topological boundary coincides with the closure of its reduced boundary, i.e., $\partial F = \overline{ \pa^* F}$.         
From now on we will always assume that $F$ coincides with its open representative.
	\item [ii)]
         There exists
          $c_1= c_1(m,M,h,N)>0$ such that   
	\beq\label{neigh}
	\sup_{E\Delta F} \dist(\cdot, \partial E)\leq c_1 \, . 
	\eeq

	\item[iii)] There exists $\overline\Lambda=\overline\Lambda(m, M, h, N)>0$ such that $F$ is a 
	 $\overline\Lambda$-minimizer of the perimeter, that is, 
	 \beq\label{almost}
	 P(F)\leq P(F')+\overline\Lambda |F\Delta F'|
	 \eeq
	 for all measurable set $F'\subset \R^N$  such that diam$(F\Delta F') \le 1$. 
	\item[iv)] The following Euler-Lagrange condition holds: There exists $\lambda\in \R$ such that for all $X\in C^{1}_c(\R^N; \R^N)$
\begin{equation}\label{micume0}
\int_{\partial^* F} \frac{d_E(x)}h X\cdot\nu_F\, d\H^{N-1} +\int_{\partial^* F}\div_\tau X\, d\H^{N-1}=\lambda\int_{\partial^* F}X\cdot\nu_F\, d\H^{N-1}\,.
\end{equation}
\item[v)] There exists a closed set $\Sigma$ whose Hausdorff dimension is less than or equal to $N-8$, such that $\partial^* F= \partial F\setminus \Sigma$  is an $(N-1)$-submanifold of class $C^{2,\alpha}$ for all $\alpha\in (0,1)$ with 
	$$
	|H_{\partial F} (x)| \le \overline \Lambda \quad \text{ for all } x\in \partial F\setminus \Sigma \, .
	$$
	
\item  [vi)] The set $F$ is bounded and more precisely there exist $k_0=k_0(m, M, h, N)\in \N$ and $d_0=d_0(m, M, h, N)>0$ such that $F$ is made up of at most $k_0$ connected components,
each one having diameter bounded  from above by $d_0$
(a bound on the diameter from below follows from \textup{i)}).
\end{itemize}
	\end{proposition}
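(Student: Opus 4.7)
The plan is to deduce all six properties from Proposition~\ref{prop:penal}, combined with the standard regularity theory of $\Lambda$-minimizers of the perimeter. The starting point is the pair of universal bounds
\[
P(F)\le M, \qquad \D(F,E)\le hM,
\]
obtained by testing \eqref{varprob} with $F'=E$, which will underlie every subsequent estimate.

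The first step is to show that $F$ is bounded. For any unit vector $e$ and $R>R_E(e):=\sup_{y\in E} y\cdot e$, I would compare $F$ with the half-space competitor $F'=F\cap\{x:x\cdot e\le R\}$. Half-space truncation does not increase the perimeter, and $\D(F,E)-\D(F',E)\ge (R-R_E(e))\,|F\setminus F'|$, while the volume-change penalty in \eqref{varprob-pen} charges only $\Lambda_0|F\setminus F'|$. The comparison forces $|F\setminus F'|=0$ as soon as $R-R_E(e)>h\Lambda_0$; varying $e$ confines $F$ to a Euclidean ball.

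Once $F$ is bounded, (iii) follows from Proposition~\ref{prop:penal} applied to competitors $F'$ with $\mathrm{diam}(F\Delta F')\le 1$: the distance integral contributes at most $h^{-1}\|d_E\|_{L^\infty(F\Delta F')}|F\Delta F'|$, and the penalty contributes at most $\Lambda_0|F\Delta F'|$. A crucial observation is that $|E|=m$ constrains the inradius of $E$ to be at most $r(m)=(m/\omega_N)^{1/N}$, so $|d_E|\le r(m)$ throughout $E$; once (ii) is established, also $|d_E|\le c_1$ on $F\setminus E$, so $\overline\Lambda$ can be chosen depending only on $(m,M,h,N)$. The $\overline\Lambda$-minimality in (iii) then yields the density estimates (i) and the $C^{2,\alpha}$ regularity of $\partial^*F$ up to a singular set $\Sigma$ of Hausdorff dimension at most $N-8$ in (v), together with the pointwise curvature bound $|H_{\partial F}|\le\overline\Lambda$, by classical results on almost-minimizers (see, e.g., Tamanini, or Maggi's book, Ch.~21). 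Property (ii) is deduced from (i) and $\D(F,E)\le hM$: if $\sup_{F\Delta E}\dist(\cdot,\partial E)$ exceeded some threshold $c_1(m,M,h,N)$, density would produce a positive-measure subset of $F\Delta E$ on which $\dist(\cdot,\partial E)$ is large, contradicting the integral bound. The Euler--Lagrange identity (iv) follows by differentiating \eqref{varprob} along the flow generated by $X\in C^1_c(\R^N;\R^N)$, with $\lambda$ the Lagrange multiplier for the volume constraint.

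For (vi), the bound $k_0\le m/(c\,r_0^N)$ on the number of components follows from the $C^{2,\alpha}$ regularity (v) applied at a smooth boundary point of each component, producing a half-ball of volume $\ge c\,r_0^N$ inside $F_i$. For the diameter bound on a single connected component $F_i$, the plan is to project $F_i$ onto a line realizing $d:=\mathrm{diam}(F_i)$, partition the projection interval into $\lceil d/r_0\rceil$ pieces of length $r_0$, and pick for each piece a point $x_k\in\overline{F_i}$ projecting into it; the density estimate at $x_k$ yields $|B_{r_0/2}(x_k)\cap F_i|\ge c'\,r_0^N$, and since the resulting balls are pairwise disjoint, $|F_i|\gtrsim d\,r_0^{N-1}$, giving $d\le d_0(m,M,h,N)$. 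The main obstacle is the circular dependency between (ii), (iii) and (vi): a uniform $\overline\Lambda$ requires a bound on $|d_E|$ on $F$, which itself rests on (ii). I would bypass this by first proving a rough form of (iii) with a constant depending also on the crude diameter bound on $F$ obtained in the first step, deriving rough density estimates and hence (ii) from it, and only then upgrading $\overline\Lambda$ to depend only on $(m,M,h,N)$ via the inradius remark.
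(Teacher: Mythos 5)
Your overall strategy is sound and matches the paper's framework: pass to the penalized problem \eqref{varprob-pen} via Proposition~\ref{prop:penal}, establish uniform $\overline\Lambda$-minimality, and invoke the classical regularity theory for almost-minimizers. You also correctly identify the circularity (ii)$\Rightarrow$(iii)$\Rightarrow$(i)$\Rightarrow$(ii) and propose the standard bootstrap (rough $\Lambda$-constant first, refine later); the paper sidesteps this by deriving (i) and (ii) directly from the penalized functional (citing MSS) \emph{before} establishing (iii), which is cleaner. Where your routes genuinely diverge: you prove boundedness first via half-space truncation, whereas the paper derives boundedness last, in item (vi), from the density estimates and a Vitali covering. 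Your truncation argument does work, but note that ``half-space truncation does not increase the perimeter'' holds only for a.e.\ $R$, via the Gauss--Green identity $\H^{N-1}(F^{(1)}\cap\{x\cdot e=R\})=\int_{\partial^*F\cap\{x\cdot e>R\}}e\cdot\nu_F\,d\H^{N-1}\le P(F,\{x\cdot e>R\})$; without this qualifier the step is unjustified. Similarly, for the component bounds in (vi) the paper uses a Vitali covering plus the isoperimetric inequality tested against a single connected component (removing $\hat F$ from $F$ and using \eqref{almost}), while you propose half-balls at smooth points and a slicing argument; both are viable.

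There is one genuine gap in item (v): the $C^{2,\alpha}$ regularity and the curvature bound do \emph{not} follow ``by classical results on almost-minimizers.'' $\overline\Lambda$-minimality only yields $C^{1,\alpha}$ regularity of $\partial F\setminus\Sigma$ (with $\dim_{\mathcal H}\Sigma\le N-8$). The upgrade to $C^{2,\alpha}$ requires the Euler--Lagrange equation \eqref{micume0}: since $d_E$ is Lipschitz, hence $C^{0,\alpha}$, the scalar mean curvature $H_{\partial F}=\lambda-d_E/h$ is $C^{0,\alpha}$ along the $C^{1,\alpha}$ manifold, and Schauder estimates then give $C^{2,\alpha}$. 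Likewise, the pointwise bound $|H_{\partial F}|\le\overline\Lambda$ comes from \eqref{micume0} together with \eqref{almost}, not directly from the almost-minimality theory. You should insert this elliptic-regularity bootstrap to close (v).
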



\begin{proof}
By Proposition~\ref{prop:penal}, there exists $\Lambda_0=\Lambda_0(m, M, h, N)$ such that  $F$ is a solution to 
\beq\label{eq:penalbis}
\left\{P(F)\ +\frac1h\D(F, E)+\Lambda_0\big||F|-m\big|:\, F\subset\R^N\text{ measurable} \right\}\,.
\eeq
The density estimates and \eqref{neigh} follow arguing as in \cite[ Lemma~4.4 and Proposition~3.2]{MSS}, respectively.
Now, \eqref{almost} easily follows from the fact that $F$ solves \eqref{eq:penalbis}, taking into account \eqref{neigh}.

Equation \eqref{micume0} can be derived by a standard first variation argument (see for instance \cite{MaggiBook}).  In view of (iii) and the classical  regularity theory for almost minimizers of the perimeter (see for instance \cite{MaggiBook} and the references therein), we deduce that 
there exists a closed set $\Sigma$ whose Hausdorff dimension is less than or equal to $N-8$, such that $\partial^* F= \partial F\setminus \Sigma$  is an $(N-1)$-submanifold of class $C^{1,\alpha}$ for all $\alpha\in (0,1)$. The $C^{2,\alpha}$ regularity stated in (v)  follows now from  the additional elliptic regularity implied by \eqref{micume0},   taking into account that $d_E$ is Lipschitz continuous. 
 
Item (vi) follows rather easily from the density estimates.  Indeed, let $r_0=r_0(m, M, h, N)>0$ be the radius given 
  in i). By an application of Vitali's Covering Lemma, we may find a subset $C\subset F$ such that the closed balls of the family $\{\overline{B_{r_0}(x)}\}_{x\in C}$  are pairwise disjoint and 
 $$
 F\subset \bigcup_{x\in C} \overline{B_{5r_0}(x)}\,.
 $$
 Since by \eqref{eq:density} we have $|B_{r_0}(x)\cap F|\geq c_0r_0^N$ for every $x\in C$, it follows that $\# C\leq \frac{|F|}{c_0r_0^N}=
 \frac{m}{c_0r_0^N}$. In turn, since for each connected component $\hat F$ of $F$ we have $\hat F\subset \cup_{x\in C} \overline{B_{5r_0}(x)}$,  we infer that
 $$\textrm{diam\,}\hat F\leq 10 r_0 \# C\leq  \frac{10 m}{c_0r_0^{N-1}}=:d_0(m, M, h, N)\,.$$ 
 

 It remains to bound the number of connected components. To this aim,  it is enough to show that  the measure of any connected component $\hat F$ is bounded below by a positive constant depending only on $m$, $M$, $h$,  and $N$.  
 Set $F':=F\setminus \hat F$.
Then, using  \eqref{almost}  and  the Isoperimetric Inequality, we deduce

 \begin{equation*}
 \ov\Lambda \big|\hat F\big|= \ov\Lambda \big| F\Delta F'\big|+ P(F')-(P(F)-P(\hat F))\geq P(\hat F)\geq N\omega_N^{\frac{1}{N}}\big|\hat F\big|^{\frac{N-1}{N}}\,,
 \end{equation*}
and thus  
 $$
\big|\hat F\big|\geq\Big(\frac{N}{\ov\Lambda}\Big)^N\omega_N\,.
 $$
 This concludes the proof of the proposition.
\end{proof}


%

\begin{remark} Note that \eqref{micume0} and the regularity properties given by Proposition~\ref{lm:density} imply that 
\begin{equation}\label{micume}
  \frac{d_E(x)}h+H_{\partial F}(x)=\lambda \qquad \text{ for every } x\in\partial^* F.
\end{equation}
\end{remark}

\section{The discrete volume preserving flow}

\subsection{Construction of the discrete flow}
 For any bounded  set $E\neq \emptyset$ with finite perimeter we let  $T_h E$  denote  a solution of \eqref{varprob}, with $m=|E|$.  
   It is convenient to fix a precise representative for $T_h E$;
as done for the set $E$, we assume that $T_h E$ coincides with its  Lebesgue representative.
Now, we construct by induction the discrete-in-time evolution $\{E_h^n\}_{n\in N}$. Let  $E_h^1:=T_h  E$ be a solution (arbitrarily chosen) to problem \eqref{varprob}; assuming that $E_h^k$ is defined for $k\in \{1,\ldots, n-1\}$, we let  
$E_h^{n}$ be a solution (arbitrarily chosen) to problem \eqref{varprob} with $E$ replaced by $E_h^{n-1} $.

\subsection{Stationary sets for the discrete flow}
In this section we characterize the stationary sets $E$ for the  volume preserving discrete flow, i.e., 
such that the constant sequence $E_h^n\equiv E$ 
is a  discrete volume preserving flow.

\begin{proposition}[Fixed points of the discrete scheme]\label{fpds}

Given $m, \, M, \, h>0$, there exists $s_0= s_0(m,M,h,N)>0$ such that every 
	stationary bounded set $E$ for the volume preserving discrete flow with time step $h$, with $|E|=m$ and $P(E)\leq M$  
	is made up by the union of $k$ disjoint balls with mutual distances larger that $s_0$
	and  equal volume $\frac mk$, for some $k \le k_0$, with $k_0$ as in item \textup{vi)} of Proposition \ref{lm:density}.


Viceversa, if $E$ is given  by the union of finitely many disjoint balls with positive mutual distances and  equal volumes, then there exists $h^*>0$ such that, for all $h\le h^*$, 
the  volume preserving flow $\{E_h^n\}$ starting from $E$ is unique and given by the constant sequence $E_h^n= E$ for all $n\in \N$.
\end{proposition}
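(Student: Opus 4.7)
For the forward direction, my plan is to invoke the Euler--Lagrange equation \eqref{micume}: stationarity of $E$ means that $E$ itself minimizes \eqref{varprob} with reference set $E$, and since $E$ coincides with its Lebesgue representative we have $d_E\equiv 0$ on $\partial^* E$, so \eqref{micume} reduces to $H_{\partial^* E}\equiv \lambda$ constant. The Delgadino--Maggi theorem cited in the Introduction then identifies $E$ as a finite disjoint union of balls of equal radii, with $k\leq k_0$ components by Proposition~\ref{lm:density}(vi) and common volume $m/k$. For the uniform positive lower bound $s_0$ on mutual distances I would argue by contradiction and compactness: a sequence of stationary $E_n$ with two of their balls at distance $d_n\to 0$ would, after translation (using the uniform diameter bound) and $BV$-compactness, produce an $L^1$-limit $E_\infty$ which is again a union of equal-radius balls but with two of them tangent at a point $x_0$. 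Stationarity passes to the limit by lower semicontinuity of perimeter, continuity of $\D(\cdot,E_\infty)$ under $\partial E_n\to\partial E_\infty$, and the identity $\mathcal{E}_n(E_n)=P(E_n)$, so $E_\infty$ must still satisfy Proposition~\ref{lm:density}(v). But at $x_0$ the tangent varifold is a multiplicity-two hyperplane, which is not area-minimizing; this contradicts the classification of tangent cones in the singular set of an almost-minimizer of the perimeter (in any dimension).

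For the backward direction, the inductive structure of the scheme reduces the claim to showing that, for all sufficiently small $h>0$, the incremental problem with reference set $E=\bigsqcup_{i=1}^k B(x_i,r)$ admits $F=E$ as its unique minimizer. First (compactness): testing any minimizer $F$ against $E$ itself yields $\mathcal{E}(F)\leq P(E)$, hence $\D(F,E)\leq h\,P(E)\to 0$; combining this $L^1$-closeness with the $C^{2,\alpha}$-regularity (Proposition~\ref{lm:density}(v)) and the density estimates (i), for $h$ small enough $F$ has exactly $k$ components, each parametrizable as the normal graph of a function $\psi_i$ of arbitrarily small $C^{1,\alpha}$-norm over $\partial B(x_i,r)$. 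Second (strict minimality): using the volume constraint $|F|=m$ to eliminate the first-order term, a Taylor expansion at $\psi=0$ gives
$$
\mathcal{E}(F)-\mathcal{E}(E)=\frac12\int_{\partial E}|\nabla\psi|^2\,d\H^{N-1}+\frac12\Bigl(\frac1h-\frac{N(N-1)}{r^2}\Bigr)\int_{\partial E}\psi^2\,d\H^{N-1}+R(\psi),
$$
with $R(\psi)=o(\|\psi\|_{H^1}^2)$ as $\|\psi\|_{C^1}\to 0$. For $h<h^*:=r^2/(N(N-1))$ the quadratic form is strictly positive on every nonzero volume-preserving $\psi$: the only potentially negative modes are the zero-order spherical harmonics (volume transfer between different balls), and these are absorbed by the $1/h$ coefficient. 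Together with the smallness from the first step this forces $F=E$.

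The main obstacles I anticipate are (a) the rigorous passage of stationarity to the limit in the $s_0$ argument, that is, ensuring that the Euler--Lagrange condition and its associated regularity indeed survive the $L^1$-convergence and that the non-minimizing tangent cone at the tangency point yields a genuine contradiction, and (b) quantitatively dominating the higher-order remainder $R(\psi)$ by the quadratic form uniformly as $h\to 0$ in the second-variation step, which will require extracting explicit $C^{1,\alpha}$-smallness rates from the compactness step.
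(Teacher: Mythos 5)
Your forward direction matches the paper's core approach (Euler--Lagrange \eqref{micume} with $d_E\equiv 0$, Delgadino--Maggi, Proposition~\ref{lm:density}(vi)), but for the separation $s_0$ you take a needlessly roundabout route. The paper does not pass to any limit: since a stationary $E$ is itself an almost-minimizer, it satisfies the density estimates of Proposition~\ref{lm:density}(i), and at a point of $\partial E$ realizing the minimal distance $s$ between two of the balls the outer estimate $|B_r(x)\setminus E|\geq c_0 r^N$ fails outright once $s$ is small relative to $r_0$ and the lower bound on the ball radii. You could apply exactly this to your limit set $E_\infty$ instead of invoking tangent-cone classification, but the compactness step (and the accompanying worry about whether stationarity survives the $L^1$-limit) is itself superfluous.

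The backward direction is where there is a genuine gap. First, a small arithmetic slip: the $|A|^2$ coefficient for a sphere of radius $r$ in $\R^N$ is $\frac{N-1}{r^2}$, not $\frac{N(N-1)}{r^2}$, so the threshold is $h^*=\frac{r^2}{N-1}$ as in the paper. More importantly, you correctly identify obstacle (b) --- that the remainder $R(\psi)$ (which contains a term of order $\frac1h\,O(\|\psi\|_\infty\|\psi\|_2^2)$ coming from the dissipation) must be dominated by the quadratic form \emph{uniformly} as $h\to 0$ --- but you have no mechanism to resolve it, and it is not clear this can be done by tracking $C^{1,\alpha}$-rates, since the almost-minimality constant $\ov\Lambda$ from Proposition~\ref{lm:density}(iii) degenerates as $h\to0$. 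The paper sidesteps the entire issue with a monotonicity observation that your proposal misses: fix $h_0<\frac{R^2}{N-1}$ and run the Acerbi--Fusco--Morini-type argument \emph{once}, for that single $h_0$, to get $\varepsilon>0$ with $J_{h_0}(E)<J_{h_0}(F)$ for $0<|E\Delta F|\le\varepsilon$. Then for every $0<h<h_0$,
\begin{equation*}
J_h(E)=P(E)=J_{h_0}(E)<J_{h_0}(F)\le J_h(F)\qquad\text{whenever }|F|=|E|,\ 0<|E\Delta F|\le\varepsilon,
\end{equation*}
because $\D(E,E)=0$ and $\frac1h\ge\frac1{h_0}$. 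Thus $E$ is an isolated local minimizer of $J_h$ with a minimality neighborhood independent of $h\le h_0$, and the remainder never needs to be controlled for variable $h$. Combined with your (correct) compactness step $\D(F,E)\le hP(E)\to0$, this forces $F=E$ for $h$ small. Without this trick your argument does not close.
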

\begin{proof}
By \eqref{micume0} any stationary set $E$ satisfies
$$
\int_{\partial^*E}\div_\tau X\, d\H^{N-1}=\lambda\int_{\partial^*E}X\cdot\nu_E\, d\H^{N-1}
$$ 
 for all $X\in C^{1}_c(\R^N; \R^N)$  and for some $\lambda\in \R$.
  We may then apply \cite[Theorem~1]{DM} to conclude that $E$ is given (up to a negligible set) by a finite union of disjoint (open) balls of equal volume.  Moreover, by (vi) of Proposition \ref{lm:density}
  the number of such balls is bounded by $k_0$, and therefore,  in view of \eqref{eq:density}, their radius  is bounded from below by a constant $R$ depending only on $m,M,h,N$. 
 Now, if two balls of radius larger than or equal to $R$ are at a distance $s$ small enough, then $E$ cannot satisfy the second density estimate in \eqref{eq:density}  (for example, \eqref{eq:density} cannot hold for $x\in\partial E$ of minimal distance between the two balls and $r>0$ depending on $R$ and the constants $r_0$, $c_0$ in Proposition \ref{lm:density}-i)). Therefore,  \eqref{eq:density} is violated, whenever $s \le s_0$ for a suitable  $s_0$ depending only on $m,M,h,N$, 
 thus establishing the first part of the statement. 
 
 Assume now that $E$ is union of finitely many disjoint balls with equal radius $R$ and positive mutual distances. We want to show that, for $h$ small enough, $E$ is the unique volume costrained global minimizer
 of the functional 
 $$J_h(F):= P(F)\ +\frac1h\D(F, E) =  P(F) + \frac1h\int_Fd_E(x)\, dx- \frac1h\int_E d_E(x).
 $$ 
 
 We start by showing that for $h$ small enough the  second variation of $J_h$ is positive definite with respect to volume preserving variations. 
  To this purpose, let $X\in C^1_c(\R^N;\R^N)$ be a divergence free vector field and let $\Phi(t,\cdot)$ be the associated flow satisfying $\frac{\partial \Phi}{\partial t} = X(\Phi)$ with initial condition $\Phi(0,x)= x$ for all $x\in\R^N$. Then $|\Phi(t,E)|=|E|= m$ for all $t$, and  by standard computations (see for instance \cite{AFM, Cr}) we have
 \begin{align*}
  \frac{\partial^2 }{\partial t^2}  J_h(\Phi(t,E) ) 
&= \int_{\partial E} |\nabla (X \cdot \nu_E)|^2 - \frac{N-1}{R^2} (X \cdot \nu_E)^2 \, d\H^{N-1} 
 \\
\nonumber & \quad + \frac 1h \int_{\partial E} \partial_{\nu_E} d_E (X\cdot \nu_E)^2 \, d\H^{N-1}  
\\
\nonumber
&= \int_{\partial E} |\nabla (X \cdot \nu_E)|^2 
+ \Big( \frac 1h - \frac{N-1}{R^2} \Big) 
(X \cdot \nu_E)^2 \, d\H^{N-1}
\\
\nonumber
&= : \partial^2 J_h(E)[X\cdot\nu_E], 
 \end{align*}
 where in second  equality we have used the fact that $\partial_{\nu_E} d_E \equiv 1$ on $\partial E$.   From the above expression it is clear that $\partial^2 J_h(E)$ is positive definite on $H^1(\pa E)$, provided that
 $h <  \frac{R^2}{N-1}$.
 
Fix   $h_0 <  \frac{R^2}{N-1}$. Arguing as in \cite{AFM} there exists $\e>0$ such that $J_{h_0} (E) < J_{h_0} (F)$ for all measurable $F$ such that $|F|=|E|$ and $|E\Delta F|\le \e$. 
Now notice that for all $0<h<h_0$ we have 
\begin{equation}\label{lomi}
J_h(E) = J_{h_0}(E)  < J_{h_0} (F) \le  J_{h} (F) \qquad \text{ for all $|F|=|E|$ and $0< |E\Delta F|\le \e$},
\end{equation} 
i.e., $E$ is an isolated local minimizer for $J_h$ in $L^1$, with minimality neighborhood uniform with respect to $h\le h_0$.
 	Now, given any sequence $\{h_n\}$ going to zero, let $F_n$ be a volume constrained minimizer of $J_{h_n}$; it is easy to see that $|E\Delta F_n|\to 0$ as $n\to +\infty$, and therefore, for $n$ large enough, $|E\Delta F_n| \le \e$, so that by 
\eqref{lomi} $F_n=E$.

\end{proof}
\begin{remark}
Let us also observe that  for $N\leq 7$ we have full regularity of $\partial  E$ and thus, in particular, the connected components of $E$ have positive mutual distances  and the conclusion of the first part of  Proposition \ref{fpds} would also follow by applying the classical Alexandrov's Theorem instead of the more refined results of \cite{DM}.
\end{remark}

\subsection{Long-time behaviour}

In the following, we denote by $P_\infty$ the limit of the monotone non-increasing sequence $\{P(E_h^{n})\}_{n\in\N}$:
\begin{equation}\label{e:Pinfinito}
 P_\infty = \lim_{n\to \infty}P(E_h^{n}).
\end{equation}

The following is the main result of the paper on the long time behavior of the discrete-in-time nonlocal mean curvature flow.

\begin{theorem}\label{mainthm}
Let $E$ be a bounded set of finite perimeter with $|E|=m$ and let $h>0$.
 Consider any discrete volume constrained mean curvature flow $\{E^{n}_h\}_{n\in\N}$ starting from $E$. 
Then,  setting $L:= N^{-N} \omega_N m^{1-N} P_\infty^{N} \in \N$,  where $P_\infty$ is given in \eqref{e:Pinfinito}, there exist $L$ distinct balls 
$B^1, \ldots, B^L$ with the same radius and at positive distance to each other, 
such that
the sets $E^{n}_h$  converge to $E_\infty:= \bigcup_{i=1}^L B^i$ 
in $C^k$ for every $k\in\N$.  
Moreover, the convergence is exponentially fast. 
\end{theorem}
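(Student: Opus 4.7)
The strategy is to show that the discrete flow's perimeters decay in a geometric fashion to $P_\infty$, which in turn forces the sets themselves to converge exponentially fast to a stationary configuration, whose classification is already provided by Proposition \ref{fpds}.

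\emph{Compactness and preliminary shape.} The sequence $P(E_h^n)$ is non-increasing, so $P(E_h^n)\le M:=P(E)$ and $|E_h^n|=m$ for all $n$. By item \textup{vi)} of Proposition~\ref{lm:density}, each $E_h^n$ has at most $k_0$ connected components, each of uniformly bounded diameter. Summing the dissipation inequality
$$
P(E_h^n) + \frac{1}{h}\D(E_h^n,E_h^{n-1})\le P(E_h^{n-1})
$$
gives $\sum_n \D(E_h^n,E_h^{n-1})\le h(P(E)-P_\infty)<\infty$, so in particular $\D(E_h^n,E_h^{n-1})\to 0$. Combined with the uniform $\overline\Lambda$-minimality (item \textup{iii)}) and standard regularity, up to translating each component one extracts $C^{2,\alpha}$ subsequential limits that are stationary for the discrete flow; by Proposition~\ref{fpds} each such limit is a disjoint union of equal-volume balls at positive mutual distance. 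Counting perimeters forces the number of balls to be exactly $L$.

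\emph{Eventual graphical description.} For $n$ large, each connected component $E_h^{n,i}$ is $C^{2,\alpha}$-close to a ball $B^{n,i}$ of volume $m_i=m/L$, and hence admits a radial parametrization $E_h^{n,i}=E_{f^{n,i},m_i}$ with $\|f^{n,i}\|_{C^1}\le\delta$, with $\delta$ the threshold of Theorem~\ref{Aleq} (uniformly in $i$ by Remark~\ref{rm:acdc}).

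\emph{Key dissipation estimate.} This is the heart of the argument. The Euler--Lagrange identity \eqref{micume} reads $H_{\pa E_h^n}(x)=\lambda_n - d_{E_h^{n-1}}(x)/h$ on each component, so
$$
\|H_{\partial E_h^{n,i}}-\overline H_{\partial E_h^{n,i}}\|_{L^2(\partial E_h^{n,i})}^2
\le \frac{1}{h^2}\|d_{E_h^{n-1}}-\overline{d_{E_h^{n-1}}}\|_{L^2(\partial E_h^{n,i})}^2.
$$
Applying Theorem~\ref{Aleq} (centered and rescaled to each $B^{n,i}$) gives $\|f^{n,i}\|_{H^1}^2\le C\,\|H-\overline H\|_{L^2}^2$. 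A standard Fuglede-type expansion of the perimeter around $\bigcup_i B^{n,i}$, together with the obvious bound $P\bigl(\bigcup_i B^{n,i}\bigr)\ge P_\infty$, yields
$$
P(E_h^n)-P_\infty \le C\sum_i \|f^{n,i}\|_{H^1}^2 \le \frac{C'}{h^2}\sum_i\|d_{E_h^{n-1}}\|_{L^2(\partial E_h^{n,i})}^2.
$$
Finally, a coarea-type identity (valid up to lower-order corrections because $E_h^{n,i}$ is $C^1$-close to a ball) relates $\|d_{E_h^{n-1}}\|_{L^2(\partial E_h^n)}^2$ to $\D(E_h^n,E_h^{n-1})$, producing the crucial inequality
$$
P(E_h^n)-P_\infty \le \frac{C''}{h^2}\D(E_h^n,E_h^{n-1}).
$$

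\emph{Exponential decay and conclusion.} Setting $R_n:=P(E_h^n)-P_\infty\ge \sum_{k>n} h^{-1}\D(E_h^k,E_h^{k-1})$ and using the dissipation identity $\D(E_h^n,E_h^{n-1})\le h(R_{n-1}-R_n)$, the key estimate becomes $R_n\le (C''/h)(R_{n-1}-R_n)$, i.e.\ $R_n\le \theta R_{n-1}$ for some $\theta\in(0,1)$. Hence $R_n$, and therefore $\D(E_h^n,E_h^{n-1})$, decay geometrically. Since $|E_h^n\Delta E_h^{n-1}|\le C\sqrt{\D(E_h^n,E_h^{n-1})}$ is geometrically summable, each connected component converges (without extraction) to a specific ball $B^i$; exponential $L^1$-closeness, the uniform $\overline\Lambda$-minimality and the Euler--Lagrange equation together with elliptic bootstrap upgrade convergence to $C^k$ at an exponential rate, completing the proof.

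\emph{Main obstacle.} The delicate point is Step~3 above: one must control $P(E_h^n)-P_\infty$ by $\D(E_h^n,E_h^{n-1})$ (\emph{not} merely by its square root, which is what the naive Cauchy--Schwarz on the sum of dissipations would yield). This is exactly why the quantitative Alexandrov inequality of Theorem~\ref{Aleq} is indispensable: it converts the pointwise data on the mean curvature into an $H^1$ control on the parametrization, and the Euler--Lagrange equation is then what links the mean curvature oscillation to the distance function $d_{E_h^{n-1}}$, which is the only object naturally present in the dissipation.
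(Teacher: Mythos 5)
Your proposal is essentially correct and follows the same overall architecture as the paper (compactness and convergence to a union of balls up to translations, then a quantitative Alexandrov/Euler--Lagrange argument to force exponential decay, then bootstrapping). However, the key dissipation estimate is derived by a genuinely different route. The paper tests the minimality of $E_h^n$ against the explicit competitor $\mathfrak{B}_n$ (a union of balls centered at the barycenters of the components of $E_h^{n-1}$) and then uses Lemma~\ref{disti} to control $\D(\mathfrak{B}_n, E_h^{n-1})$ by $\D(E_h^{n-1}, E_h^{n-2})$; this yields $P(E_h^n)-P_\infty\le \tfrac{C}{h}\D(E_h^{n-1},E_h^{n-2})$, an estimate shifted one step backwards in time, and the exponential decay then requires the auxiliary Lemma~\ref{an1} with $\ell=2$. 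You instead expand the perimeter directly \`a la Fuglede around matching balls and feed the EL identity at time $n$ into Theorem~\ref{Aleq}, obtaining $P(E_h^n)-P_\infty\le \tfrac{C}{h^2}\D(E_h^n,E_h^{n-1})$ at the \emph{current} step, so the geometric recursion $R_n\le\theta R_{n-1}$ is immediate with no iteration lemma. Both routes hinge on Theorem~\ref{Aleq} plus the EL equation; yours is arguably more direct (no competitor, no index shift), at the cost of needing the two-sided Taylor expansion of the perimeter rather than just a dissipation comparison.

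A few points deserve care. First, the reference balls $B^{n,i}$ must have volume exactly $m_i^n:=|E_h^{n,i}|$ (not $m/L$), for otherwise the linear term $(N-1)\int f$ in the Fuglede expansion is not of order $\|f\|_{H^1}^2$; with that choice, $P\bigl(\bigcup_i B^{n,i}\bigr)\le P_\infty$ by concavity of $t\mapsto t^{(N-1)/N}$ — your inequality is written with the opposite sign, which is the direction that does not help. Second, upgrading exponential $L^2$ (or $L^1$) decay to exponential $C^k$ decay is not automatic from uniform higher-order bounds; one needs the interpolation inequality $\|D^k g\|_2\le C\|D^{2k}g\|_2^{1/2}\|g\|_2^{1/2}$, as the paper does in its final step. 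Third, the preliminary compactness in your first paragraph compresses what is in fact a delicate argument: different components may escape to infinity at different rates, one must verify (as in Lemma~\ref{uniconvd}) that limits along $E_h^{k_n}$ and $E_h^{k_n-1}$ coincide so that the limit is a true fixed point, and equality of the limiting radii is obtained by passing to the limit in the localized Euler--Lagrange identity with a common Lagrange multiplier.
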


 The following is an immediate corollary.

\begin{corollary}\label{cor:menodidue}
If $P(E)< 2 P(\bar B)$, where $\bar B$ is a ball with volume ${\frac{m}{2}}$, or if $E$ is union of two tangent balls, than $E_\infty$ is a ball.
\end{corollary}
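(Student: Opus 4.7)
The plan is to translate ``$E_\infty$ is a ball'' into a quantitative bound on the limit perimeter $P_\infty$ via Theorem \ref{mainthm}. Since the limit consists of $L$ disjoint balls of equal volume $m/L$, a direct computation gives the relation $P_\infty = L^{1/N} P(B^{(m)})$, where $B^{(m)}$ is a ball of volume $m$. As $L$ is a positive integer, $L=1$ holds if and only if $P_\infty < 2^{1/N}P(B^{(m)}) = 2\,P(\bar B)$, since the next admissible value $L=2$ would already yield $P_\infty = 2\,P(\bar B)$. Thus, in both hypotheses of the corollary, it suffices to prove the strict bound $P_\infty < 2\,P(\bar B)$.

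Under the first hypothesis $P(E) < 2\,P(\bar B)$, the monotonicity of the perimeter along the discrete flow, which is built into the incremental problem \eqref{varprob} by testing with the previous set, gives $P_\infty \le P(E) < 2\,P(\bar B)$, and one concludes $L=1$.

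Under the second hypothesis, I would split into two subcases. If the two tangent balls have distinct volumes $m_1 \ne m_2$ with $m_1+m_2=m$, the strict concavity of $t \mapsto t^{(N-1)/N}$ on $(0,\infty)$ yields $P(E) = P(B^{(m_1)}) + P(B^{(m_2)}) < 2\,P(\bar B)$, reducing the problem to the first hypothesis. The remaining and truly borderline case is that of two tangent balls of equal volume $m/2$, for which $P(E) = 2\,P(\bar B)$ exactly. Here I would invoke Proposition \ref{fpds}: a configuration of tangent balls has zero mutual distance and therefore does not belong to the class of stationary configurations (disjoint balls separated by distance $> s_0$ and equal volume). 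Consequently, no minimizer $E_h^1$ of the incremental problem can coincide with $E$ up to negligible sets. Since $\mathrm{dist}(\cdot,\partial E) > 0$ off the null set $\partial E$, this forces $\D(E_h^1,E) > 0$, and plugging this into the minimality inequality $P(E_h^1) + \D(E_h^1,E)/h \le P(E)$ yields the strict drop $P(E_h^1) < P(E) = 2\,P(\bar B)$. Monotonicity then propagates this bound to $P_\infty < 2\,P(\bar B)$, so $L=1$ and $E_\infty$ is a single ball.

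The only non-routine point in this argument is the strict perimeter decrease at the very first step in the equal-volume tangent subcase: the raw minimality inequality only delivers $P(E_h^1) \le P(E)$, and the upgrade to a strict inequality requires ruling out that the constant sequence $E_h^n \equiv E$ is itself an admissible discrete flow. This is precisely the content of the rigidity classification of stationary sets in Proposition \ref{fpds}, which via the Delgadino--Maggi theorem excludes tangent configurations from the list of fixed points of the scheme.
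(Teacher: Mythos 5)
Your proof is correct and follows essentially the same route as the paper's: reduce $L=1$ to the strict perimeter bound $P_\infty < 2P(\bar B)$, use monotonicity for the first hypothesis, and for the tangent-balls hypothesis invoke Proposition~\ref{fpds} to rule out stationarity and thereby force a strict perimeter drop at the first step. The only difference is that you explicitly dispose of the case where the two tangent balls have unequal volumes (by strict concavity of $t\mapsto t^{(N-1)/N}$, which reduces it to the first hypothesis), whereas the paper's one-line proof writes ``$P(E)=2P(\bar B)$'' and so tacitly assumes equal volumes; your treatment is slightly more careful on that point but the substance is identical.
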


\begin{proof}
If $P(E)< 2 P(\bar B)$, than by definition $P_\infty<2 P(\bar B)$ and $L<2$.
On the other hand, two tangent balls are not stationary for the discrete flow, as shown in Proposition \ref{fpds}. This implies that $P(E_h^n)<P(E)= 2P(\bar B)$, so that one can argue as before.
\end{proof}

The rest of this section is devoted to the proof of Theorem \ref{mainthm}.
 We start with the following lemma.


%
%
%
%
%
%
%
%

\begin{lemma}\label{uniconvd}
Let $\{E_h^n\}_{n\in\N}$ be a volume preserving discrete flow starting from $E$ and let $E_h^{k_n}$ be a subsequence such that $\chi_{E_h^{k_n} -\tau_n}\to \chi_F$ in $L^1(\R^N)$ for some set $F$ and a suitable sequence $(\tau_n)_{n\in\N}\subset \R^N$. 
Then $d_{E_h^{k_n-1} } (\cdot + \tau_n) \to d_F$ locally uniformly in $\R^N$.
\end{lemma}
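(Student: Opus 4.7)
The plan is to reduce the conclusion to the standard fact that, for a sequence of sets satisfying uniform density estimates, $L^1$ convergence upgrades to local uniform convergence of the signed distance functions. Proposition~\ref{lm:density}~(i) provides such uniform density estimates for every $E_h^n$, and they are translation-invariant, so it suffices to prove that the shifted previous step $\widetilde E^{k_n-1}:=E_h^{k_n-1}-\tau_n$ also converges to $F$ in $L^1(\R^N)$; the conclusion then follows from the identity $d_{E_h^{k_n-1}}(\cdot+\tau_n)=d_{\widetilde E^{k_n-1}}(\cdot)$.

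To obtain this convergence, I would first exploit minimality. Testing the $n$-th incremental problem against the competitor $E_h^{n-1}$ yields the discrete dissipation inequality
$$
P(E_h^n)+\frac{1}{h}\D(E_h^n, E_h^{n-1})\le P(E_h^{n-1}),
$$
which, thanks to the monotone convergence $P(E_h^n)\searrow P_\infty$, telescopes to $\sum_{n\ge 1}\D(E_h^n,E_h^{n-1})\le h(P(E)-P_\infty)<+\infty$; in particular $\D(\widetilde E^{k_n},\widetilde E^{k_n-1})\to 0$ by translation invariance. By the uniform bounds on perimeter, mass, and on the number and diameter of connected components (Proposition~\ref{lm:density}~(vi)), up to extracting a subsequence $\widetilde E^{k_n-1}$ converges in $L^1_{loc}$ to some bounded set $F'$. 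Applying the standard density-estimate upgrade to both $\widetilde E^{k_n}\to F$ and $\widetilde E^{k_n-1}\to F'$ yields locally uniform convergence of the distance functions $d_{\widetilde E^{k_n}}\to d_F$ and $d_{\widetilde E^{k_n-1}}\to d_{F'}$, so that passing to the limit in
$$
\D(\widetilde E^{k_n},\widetilde E^{k_n-1})= \int_{\widetilde E^{k_n}\Delta\widetilde E^{k_n-1}}|d_{\widetilde E^{k_n-1}}|\,dx
$$
by dominated convergence forces $\D(F,F')=0$. Since $|d_{F'}|>0$ off the Lebesgue-null set $\partial F'$, this gives $|F\Delta F'|=0$, hence $F'=F$, and every subsequential $L^1_{loc}$-limit of $\widetilde E^{k_n-1}$ coincides with $F$.

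The most delicate step, I expect, will be ruling out loss of mass of $\widetilde E^{k_n-1}$ at infinity, which is needed both to promote the $L^1_{loc}$-convergence to genuine $L^1(\R^N)$-convergence and to justify the dominated-convergence step by confining all integrations to a common compact set. Since $|\widetilde E^{k_n}|=m=|F|$, the set $\widetilde E^{k_n}$ is eventually contained in a fixed ball $B_R$ (its components are finitely many and of bounded diameter, with uniform lower bounds on their volumes obtained in the proof of Proposition~\ref{lm:density}~(vi)). A putative component $C_n$ of $\widetilde E^{k_n-1}$ lying entirely outside $B_{R+d_0}$ would then be disjoint from $\widetilde E^{k_n}$ and have volume bounded below by the same uniform constant; combined with the uniform perimeter bound and a standard tube-volume estimate, this forces $\int_{C_n}|d_{\widetilde E^{k_n-1}}|\,dx$ to be bounded below by a fixed positive constant, contradicting $\D(\widetilde E^{k_n},\widetilde E^{k_n-1})\to 0$.
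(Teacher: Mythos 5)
Your proof is correct and follows essentially the same route as the paper: sum the discrete dissipation inequality to get $\D(E_h^n,E_h^{n-1})\to0$, extract an $L^1_{loc}$-limit $F'$ of the shifted previous steps, upgrade both convergences to locally uniform convergence of (signed) distance functions via the density estimates of Proposition~\ref{lm:density}, and pass to the limit in the dissipation to conclude $F\Delta F'\subset\partial F'$, hence $F'=F$ since $|\partial F'|=0$. The only difference is your final paragraph: the paper avoids the ``no mass escapes to infinity'' argument by passing to the limit in $\int_{(\widetilde E^{k_n}\Delta\widetilde E^{k_n-1})\cap B_R}\dist(\cdot,\partial\widetilde E^{k_n-1})\,dx$ for each fixed ball $B_R$ and letting $R\to\infty$ afterwards, which makes the dominated-convergence step trivial and renders the tube-volume estimate unnecessary; your version of that step is correct but not required.
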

 \begin{proof}
 We start by observing that for very $n$, by the minimality  of $E_h^n$, using $E_h^{n-1}$ as a competitor,  we have
\beq\label{triviale}
 \frac 1h \D(E_h^n,E_{h}^{n-1}) \le \P(E_h^{n-1}) - \P(E_h^n).
\eeq
Therefore, summing over $n$ we get 
$$
\sum_{n=1}^{\infty}  \frac 1h \D(E_h^n,E_{h}^{n-1}) \le  \P(E)\,.
$$
In particular,
\beq\label{dissto0}
\D(E_h^n,E_{h}^{n-1}) \to 0 \qquad\text{as }n\to\infty\,.
\eeq
Passing to a further (not relabelled) subsequence we may assume that there exists $G$ such that  $E_{h}^{k_n-1}-\tau_n\to G$ in $L^1_{loc}(\R^N)$. In fact, by the density estimates  \eqref{eq:density} and standard arguments the convergence holds in the Kuratowski sense together with the   Kuratowski  convergence of their boundaries. Thus, 
$$
\dist(\cdot, \pa E_{h}^{k_n-1}-\tau_n)\to \dist(\cdot, \pa G) \qquad\text{locally uniformly in }\R^N\,.
$$
Combining the latter information with \eqref{dissto0}, for every $R>0$  we easily get
$$
0=\lim_{n\to\infty}\int_{(E_h^{k_n}\Delta E_{h}^{k_n-1}-\tau_n)\cap B_R}\dist(x, \pa E_{h}^{k_n-1}-\tau_n)\, d\H^{N-1}=\int_{(F\Delta G)\cap B_R}\dist(x, \pa G)\, d\H^{N-1}.
$$
 The last equality yields $F\Delta G\subset \pa G$. As $G$ still satisfies the density estimates we have $|\pa G|=0$ and the conclusion follows. 
 \end{proof}
 
 In the next proposition we start by showing that for any discrete volume preserving flow  the evolving sets  are eventually made up of a constant  number $L$ of connected components, each of which converging up to translations to  the same ball   of volume $m/L$. After establishing such a result, it will remain to show that the magnitude of such translations decays to zero exponentially fast.
\begin{proposition}[Long-time behavior up to translations]\label{prop:uptrans}
Let $\{E_h^n\}_{n\in\N}$ be a discrete flat flow with time step $h>0$ and prescribed volume $m>0$.
Let $P_\infty$ and $L$ be as in the statement of Theorem \ref{mainthm}.
Then, for $n$ sufficiently large, $E_h^n$ has $L$ distinct connected components $E_{h,1}^n, \ldots, E_{h,L}^n$, such that $\textup{dist}(E_{h,i}^n, E_{h,j}^n)\geq \frac{s_0}{2}$ for $i\neq j$ (with $s_0$ the constant of Proposition \ref{fpds}), and  $E_{h,i}^n-\bary (E_{h,i}^n)$ converge to the ball centered at the origin with volume $\frac{m}{L}$ in $C^k$ for every $k\in \N$.
\end{proposition}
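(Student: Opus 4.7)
The strategy is to analyse subsequential limits via compactness, identify them through the stationary-set characterisation of Proposition~\ref{fpds}, count components using $P_\infty$, and finally upgrade the convergence. The starting point is the summability of the dissipations coming from \eqref{triviale}: telescoping gives
$$
\sum_{n\ge 1}\frac{1}{h}\D(E_h^n,E_h^{n-1})\le P(E),
$$
so in particular $\D(E_h^n,E_h^{n-1})\to 0$. Along any subsequence $k_n\to\infty$, Proposition~\ref{lm:density}(vi) provides a uniform bound on the number of connected components (by $k_0$) and on their diameters (by $d_0$). Hence, after choosing one translation per component, we may pass to a further subsequence along which each component of $E_h^{k_n}$ (possibly grouped into clusters of components that remain at bounded mutual distance) converges in $L^1_{loc}$ and in the Hausdorff sense of boundaries to some bounded limit set $F$. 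By Lemma~\ref{uniconvd} applied cluster by cluster, the preceding iterate $E_h^{k_n-1}$ (translated by the same vectors) converges to the same limit $F$. Passing to the limit in the Euler--Lagrange equation \eqref{micume}, together with the fact that $d_{E_h^{k_n-1}}/h\to 0$ uniformly on $\partial F$ (because $\partial E_h^{k_n-1}\to \partial F$ in Hausdorff distance) and the uniform bound on $H_{\partial E_h^{k_n}}$, $F$ turns out to satisfy the distributional constant-mean-curvature equation $\int_{\partial^*F}\div_\tau X\,d\H^{N-1}=\lambda_\infty\int_{\partial^*F}X\cdot\nu_F\,d\H^{N-1}$. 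By \cite[Thm.~1]{DM} (used exactly as in Proposition~\ref{fpds}), $F$ is a finite disjoint union of open balls of equal radius, with pairwise distance $\ge s_0$ from Proposition~\ref{fpds}. A crucial point is that the Lagrange multiplier $\lambda_n$ is a single scalar at each step, so all clusters share the same $\lambda_\infty$ and hence the same radius.

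Denote by $L_\infty$ the total number of balls obtained as subsequential limits (summed over all clusters). Volume conservation $|E_h^{k_n}|=m$ together with $L^1$ convergence gives $L_\infty\omega_N r^N=m$, where $r$ is the common radius. The monotone convergence $P(E_h^{k_n})\to P_\infty$, combined with the standard perimeter continuity for $\bar\Lambda$-minimisers converging in $L^1$ (Proposition~\ref{lm:density}(iii)), yields $P(F_{\rm tot})=L_\infty N\omega_N r^{N-1}=P_\infty$, where $F_{\rm tot}$ denotes the disjoint union of all limit balls. Eliminating $r$, one gets
$$
L_\infty \;=\; \frac{P_\infty^{N}}{N^N\omega_N m^{N-1}}\;=\;L,
$$
so in particular $L\in\N$ and the number of limit balls is independent of the subsequence.

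Since every subsequence of $\{E_h^n\}$ admits a further subsequence whose components (after translating each by its barycentre) converge to the ball $B^{(m/L)}$, the whole sequence does: if for some $\varepsilon_0>0$ infinitely many $E_h^n$ failed to split into exactly $L$ components each $\varepsilon_0$-close (in $L^1$) to a translate of $B^{(m/L)}$, a contradiction with the previous paragraph would arise. The distance bound $\textup{dist}(E_{h,i}^n,E_{h,j}^n)\ge s_0/2$ for $n$ large follows because the limiting balls are at mutual distance $\ge s_0$ and the convergence is uniform. Finally, the convergence upgrades to $C^k$: by Proposition~\ref{lm:density}(v) the boundary of $E_{h,i}^n-\bary(E_{h,i}^n)$ is a $C^{2,\alpha}$ hypersurface with uniformly bounded mean curvature; being $L^1$-close to a sphere, it is (for $n$ large) a normal graph over it with small $C^{1,\alpha}$-norm. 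Writing the Euler--Lagrange equation \eqref{micume} for this graph and iterating Schauder estimates (using that $d_{E_h^{n-1}}/h$ is as smooth as $\partial E_h^{n-1}$ away from the cut locus, together with a simple bootstrap) yields $C^{k,\alpha}$ convergence for every $k$.

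\textbf{Main obstacle.} The delicate part is ensuring that subsequential limits are identified correctly across different clusters. It must be emphasised that a single Lagrange multiplier $\lambda_n$ governs the whole of $E_h^n$, which forces every cluster limit to consist of balls with the \emph{same} radius; without this observation, different clusters could a priori stabilise around balls of different sizes, and the counting argument determining $L$ would fail. A secondary technical point is passing to the limit in $d_{E_h^{n-1}}/h$: this requires the Hausdorff convergence of boundaries, which is guaranteed by the uniform density estimates of Proposition~\ref{lm:density}(i), and allows the constant-mean-curvature equation to be read off in the limit.
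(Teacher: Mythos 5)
Your proof is correct and follows the same overall architecture as the paper (subsequence extraction, cluster decomposition by grouping components with bounded mutual distances, Delgadino--Maggi, perimeter/volume bookkeeping to compute $L$, subsequence principle, then bootstrap to $C^k$), but you take a genuinely shorter route in the key identification step. The paper first shows, via a competitor argument (replacing the cluster $\tilde F_i^n$ in the incremental problem by a rescaled competitor $\tilde G_i^n$ and passing to the limit in the minimality inequality), that each cluster limit $\tilde F_i$ is a \emph{fixed point} of the discrete scheme, and then invokes Proposition~\ref{fpds}; only afterwards does it localize and pass the Euler--Lagrange equation \eqref{micume0} to the limit, solely to conclude that all radii equal $(N-1)/\lambda_\infty$. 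You skip the fixed-point detour: using Lemma~\ref{uniconvd} together with the Kuratowski convergence of boundaries to kill the term $d_{E_h^{k_n-1}}/h$, and the local $C^{1,\alpha}$ convergence of boundaries coming from uniform $\overline\Lambda$-minimality to pass the first-variation integrals to the limit, you read off the distributional CMC condition for $\tilde F_i$ directly and apply Delgadino--Maggi at once. This makes the shared-multiplier mechanism the organizing principle from the start and eliminates one passage to the limit. One small point you should make explicit: the $s_0$-separation in Proposition~\ref{fpds} is proved there for stationary sets, via the density estimates \eqref{eq:density}; in your version $\tilde F_i$ is a priori only a set with distributional CMC, not a minimizer of the scheme, so you need to observe that \eqref{eq:density} is stable under $L^1$/Kuratowski convergence and hence holds for $\tilde F_i$, which is what the $s_0$-argument in Proposition~\ref{fpds} actually uses.
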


\begin{proof}
It sufficies to prove that, given any subsequence $\{E_{h}^{k_n}\}_{n\in\N}$, there existe a sub-subsequence (not relabelled) satisfying the conclusions of the proposition.

To this purpose, let $\{E_{h}^{k_n}\}_{n\in\N}$ be a given subsequence.
By Propositon \ref{lm:density}-vi), each set $E_h^{k_n}$ is made up of  $l_n\leq k_0$ connected components with uniformly bounded diameter less or equal $d_0$.
Therefore, there exist $l_n$ balls $\{B_{d_0}(\xi_i^n)\}_{i=1, \ldots, l_n}$ (not necessarily disjoint one from the other), each containing a different component of $E_{h}^{k_n}$ and such that $E_h^{n} \subset \cup_{i=1}^{l_n} B_{d_0} (\xi_i^n)$.
Up to passing to a subsequence (not relabelled), we can assume that $l_n= \tilde l$, and for all $1\le i<j\le \tilde l$ the following limits exist
$$
\lim_n |\xi_i^n - \xi_j^n| =: d_{i,j}.
$$
Now we define the following equivalence classes: $i\equiv j$ if and only if $d_{i,j} < + \infty$. Denote by $l$ the number of such equivalent classes, let $j(i)$ be a representative for each class $i\in \{1, \ldots, l\}$,  and set $\sigma_i^n:=\xi_{j(i)}$ for $i=1,\ldots, l$.
We have constructed a subsequence $E_h^{k_n}$ satisfying   
$E_h^{k_n} \subset \cup_{i=1}^l B_R(\sigma_i^n)$, where $R= d_0 + \max\{d_{i,j} : d_{i,j}<\infty\}+1$, and for all $i\neq j$ there holds $|\sigma_i^n-\sigma_j^n|\to +\infty$ as $n\to +\infty$. 
We remark that, while $E_h^{k_n -1}$ are in general not   elements of the subsequence $\{E_h^{k_n}\}_{n\in\N}$, they will still play a role in our arguments, since they will  be involved in exploiting the minimality properties of  $E_h^{k_n}$.

Now, fix $1\le i \le l$, and set 
$$
F^n_i:= (E_h^{k_n} - \sigma_i^n), \qquad  \tilde F^n_i:= (E_h^{k_n} - \sigma_i^n)\cap B_R, \qquad m^n_i:= |\tilde F^n_i|.
$$
Up to a subsequence, we have $m^n_i\to m_i$ for some $m_i>0$. 
Moreover, by Lemma \ref{uniconvd} and the compactness of sets of equi-bounded perimeters, there exist measurable sets $\tilde F_i\subset\subset B_R$ such that (again up to a subsequence) 
\begin{equation}\label{locconv}
\tilde F^n_i \to \tilde F_i \text{ in } L_{}^1, \qquad   d_{E_h^{k_n - 1}}(\cdot + \sigma^n_i) \to d_{\tilde F_i}(\cdot) \text{ locally uniformly}. 
\end{equation}
We shall show that $\tilde F_i$ is stationary for the discrete volume preserving flow.
To this aim, let 
$\tilde G_i$ be any bounded set with $|\tilde G_i|=m_i$.
We define  the homotetically rescaled sets $\tilde G^n_i :=\left(\frac{m_i^n}{m_i}\right)^{\frac1N} \tilde G_i$ such that  $|\tilde G_i^n|=m_i^n$. Note that
$\tilde G^n_i \to \tilde G_i$ in $L^1$ and $P(\tilde G^n_i) \to P(\tilde G_i)$ as $n\to +\infty$. 
We set now $G_i^n:=  F^n_i \cup \tilde G^n_i \setminus \tilde F^n_i$: notice that, since $\tilde G_i^n$ is bounded and  the connected components of  $F_i^n\setminus \tilde F_i^n$ diverge, it follows that, for sufficiently large $n$, $G_i^n$ is made up by the same connected componets of $F_i^n$ except $\tilde F^n_i$ which is replaced by $\tilde G^n_i$; in particular, $|F_i^n| = |G_i^n|$.
By the minimality of $E_h^{k_n}$ we have 
\begin{equation*}
P(F^n_i)\ +\ \frac{1}{h}\int_{F^n_i} d_{E_h^{k_n - 1}}(x + \sigma^n_i)\,dx
\le
P(G^n_i)\ +\ \frac{1}{h}\int_{G^n_i} d_{E_h^{k_n - 1}}(x + \sigma^n_i)\,dx.
\end{equation*}
Since  
	the connected components
	of $F_i^n\setminus \tilde F_i^n$ diverge
 and the two sets $F_i^n$ and $G_i^n$ differ only for the components $\tilde F_i^n$ and $\tilde G_i^n$, by addiditivity the previous inequality is equivalent to 
\begin{equation}
P(\tilde F^n_i)\ +\ \frac{1}{h}\int_{\tilde F^n_i} d_{E_h^{k_n - 1}}(x + \sigma^n_i)\,dx
\le
P(\tilde G^n_i)\ +\ \frac{1}{h}\int_{\tilde G^n_i} d_{E_h^{k_n - 1}}(x + \sigma^n_i)\,dx.
\end{equation}
Passing to the limit as $n\to +\infty$, using \eqref{locconv} and the lower semicontinuity of the perimeter, since the sets $\tilde F_i^n$ and $\tilde G_i^n$ are uniformly bounded, we deduce that
\begin{equation}
P(\tilde F_i)\ +\ \frac{1}{h}\int_{\tilde F_i} d_{\tilde F_i} (x)\,dx
\le
P(\tilde G_i)\ +\ \frac{1}{h}\int_{\tilde G_i} d_{\tilde F_i} (x) \,dx.
\end{equation}
This minimality property extends by density to all competitors $G_i$ with finite perimeter and the same volume $m_i$, so that we deduce that $\tilde F_i$ is a fixed point for the discrete scheme with prescribed volume $m_i$, and whence 
by Lemma \ref{fpds} $\tilde F_i$ is given by the union of disjoint balls with positive mutual distances and equal volume.
Moreover, since $\tilde F_i$ are uniform $\overline{\Lambda}$-minimal by Proposition \ref{lm:density}, from the classical regularity theory (see \cite{MaggiBook}) we also deduce that $\tilde F_i^n$ converge to $\tilde F_i$ in $C^{1, \alpha}$ for every $\alpha\in (0,1)$. In particular, for $n$ large enough, $\tilde F_i^n$ has the same number of connected components of $\tilde F_i$.

Summarizing, we have shown that, for a subsequence (not relabelled) of $E_h^{k_n}$ and for $n$ large enough, $E_h^{k_n}$ is made up by a fixed number $K$ of connected components $E_{h,1}^{k_n},\ldots, E_{h,K}^{k_n}$, each converging to a ball (possibly with different radius, if the components belong to different equivalent classes according to the relation introduced above).
Therefore, for  $i\leq K$ we have  $E_{h,i}^{k_n}- \bary({E_{h,i}^{k_n}} )\to B_{R_i}$, where $B_{R_i}$ is the ball centered at the origin with radius $R_i>0$.

It remains to show that all the radii $R_i$ are equal to $R$: from this and the $C^1$ convergence of the translated components to $B_R$, it follows that $K P(B_{R}) = P_\infty$, i.e., $K=L$.
To this aim, we consider the Euler-Lagrange equation \eqref{micume0}
for $E_{h}^n$: for every $X\in C_c^1(\R^n;\R^n)$,
\[
\int_{\partial E_{h}^{k_n}} \frac{d_{E_{h}^{{k_{n}-1}}}(x)}h X\cdot\nu_{E_{h}^{k_n}}\, d\H^{N-1} +\int_{\partial E_{h}^{k_n}}\div_\tau X\, d\H^{N-1}=\lambda_n\int_{\partial E_{h}^{k_n}}X\cdot\nu_{E_{h}^{k_n}}\, d\H^{N-1}\,.
\]
By Proposition \ref{lm:density}-ii) and v), we deduce that 
\[
|\lambda_n|\leq h^{-1} \|d_{E_{h}^{{k_n}-1}}\|_{L^\infty(E_{h}^{{k_n}})} + \|H_{E_{h}^{{k_n}}}\|_{L^\infty(E_{h}^{{k_n}})}\leq h^{-1} c_1 + |\bar{\Lambda}|.
\]
Therefore, by passing to a further subsequence (not relabelled), we can assume that $\lambda_n\to \lambda$, for some $\lambda\in \R$.
Arguing as before, we can localize the Euler-Lagrange equation to each single $F_i^n$:
\[
\int_{\partial F_{i}^n} \frac{d_{E_{h}^{{k_n}-1}}(x+\sigma_i^n)}h X\cdot\nu_{F_{i}^n}\, d\H^{N-1} +\int_{\partial F_{i}^n}\div_\tau X\, d\H^{N-1}=\lambda_n\int_{\partial F_{i}^n}X\cdot\nu_{F_{i}^n}\, d\H^{N-1}\,.
\]
Passing into the limit as $n\to \infty$ and taking into account Lemma \ref{uniconvd}, we deduce that 
\[
\int_{\partial \tilde F_{i}}\div_\tau X\, d\H^{N-1}=\lambda\int_{\partial \tilde F_{i}}X\cdot\nu_{\tilde F_{i}}\, d\H^{N-1}\,.
\]
In particular, this shows that $R_i=\frac{N-1}{\lambda}$.

Finally, the $C^k$ convergence follows by a classical bootstrap method. The  idea is to describe the boundaries $\pa F_n$ (up to changing the reference frame)  locally as graphs of suitable functions $f_n: B'\to \R$, with $B'$ a ball of $\R^{N-1}$. Then we have
$$
\div\left(\frac{\nabla f_n}{\sqrt{1+|\nabla f_n|^2}}\right)=H_n\,,
$$
with $H_n$ and $\nabla f_n$ uniformly bounded in $C^{0,\alpha}$ for every $\alpha\in (0,1)$. Differentiating the equation in any direction $v$, we get
$$
\textup{div}\big(\nabla A(\nabla f_n) \nabla (\partial_v f_n)\big) =\pa_v H_n\,, 
$$
where the coefficients $A(\nabla f_n) $ are uniformly elliptic and uniformly bounded in the $C^{0,\alpha}$ norm. 
We may therefore apply \cite[Theorem 5.18]{GM} to conclude that  the functions $\partial_v f_n$ are uniformly bounded in the $C^{1,\alpha}$ norm. In order to bound the higher order norms we may now apply a standard bootstrap argument and  iteratively use \cite[Theorem 5.18]{GM}. We leave the details to the reader.
\end{proof}

We recall the notation introduced before \eqref{schifo}: $B^{(\mu)}$ stands for the ball of volume $\mu$ and $r(\mu)$ denotes its radius.

\begin{lemma}\label{disti0}
Let $\mu>0$  and $\eta>0$. There exists $\bar\delta>0$ with the following property: if  $f_1,\, f_2 \in  C^1(\partial B^{(\mu)})$ with  $\|f_i\|_{C^1} \le \ov\delta$  and $|E_{f_i, \mu}|=\mu$ for $i=1,2$  we have
\begin{align}
&r(\mu)^2 (1-\eta) \frac{\|f_1 - f_2\|_{2}^2}{2} \leq \D(E_{f_1, \mu},E_{f_2, \mu}) \leq  r(\mu)^2 (1+\eta)\frac{\|f_1 - f_2\|_{2}^2}{2} ,\label{f1menof2}
\\
&\frac{1-\eta}2\int_{\partial E_{f_1, \mu}} d^2_{E_{f_2, \mu}} \, d\mathcal H^{N-1}\leq  \D(E_{f_1, \mu},E_{f_2, \mu}) \leq  \frac{ 1+\eta}2\int_{\partial E_{f_1, \mu}} d^2_{E_{f_2, \mu}} \, d\mathcal H^{N-1}\,,\label{disdis} \\ 
& 
|\bary( E_{f_1, \mu})-\bary( E_{f_2, \mu})|\leq C\sqrt{\D( E_{f_1, \mu},  E_{f_2, \mu})}\,,
\label{bari2000}
\end{align}
where $C>0$ depends only on $N$ and $\mu$.
\end{lemma}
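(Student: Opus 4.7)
The plan is to reduce everything to an integration in polar coordinates on the unit sphere. For $x \in \partial B^{(\mu)}$ I write $x = r(\mu)\omega$ with $\omega \in \partial B$, and set $g_i(\omega) := f_i(r(\mu)\omega)$, so that $\partial E_{f_i,\mu} = \{r(\mu)(1+g_i(\omega))\omega : \omega \in \partial B\}$, $\|g_i\|_{C^1}$ is comparable to $\ov\delta$, and $d\mathcal H^{N-1}(x) = r(\mu)^{N-1}\, d\mathcal H^{N-1}(\omega)$. The crucial preliminary fact is a uniform distance estimate: since the outward unit normal to $\partial E_{f_2,\mu}$ at the point $r(\mu)(1+g_2(\omega))\omega$ differs from $\omega$ by an $O(\ov\delta)$ correction (by the formula already recalled in the proof of Theorem \ref{Aleq}), a tubular-neighborhood parametrization $(\omega',s)\mapsto r(\mu)(1+g_2(\omega'))\omega' + s\,\nu_{E_{f_2,\mu}}$ yields
\[
d_{E_{f_2,\mu}}(\rho\omega) = \bigl(\rho - r(\mu)(1+g_2(\omega))\bigr)\bigl(1+O(\ov\delta)\bigr)
\]
uniformly for $\rho\omega$ in a fixed neighborhood of $\partial E_{f_2,\mu}$ containing the symmetric difference $E_{f_1,\mu}\Delta E_{f_2,\mu}$.

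For \eqref{f1menof2} I would then apply Fubini in polar coordinates: the fiber of $E_{f_1,\mu}\Delta E_{f_2,\mu}$ above the direction $\omega$ is the radial interval bounded by $r(\mu)(1+g_1(\omega))$ and $r(\mu)(1+g_2(\omega))$, on which $\rho^{N-1} = r(\mu)^{N-1}\bigl(1+O(\ov\delta)\bigr)$, so the inner integral evaluates to $r(\mu)^2(g_1-g_2)^2/2$ up to a $(1+O(\ov\delta))$ factor. Hence
\[
\D(E_{f_1,\mu},E_{f_2,\mu}) = \frac{r(\mu)^{N+1}}{2}\int_{\partial B}(g_1-g_2)^2\, d\mathcal H^{N-1}(\omega)\cdot\bigl(1+O(\ov\delta)\bigr) = \frac{r(\mu)^2}{2}\|f_1-f_2\|_2^2\,\bigl(1+O(\ov\delta)\bigr),
\]
where the last equality uses the Jacobian of the substitution $x=r(\mu)\omega$. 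Choosing $\ov\delta$ so small that $O(\ov\delta)\le\eta$ proves \eqref{f1menof2}. Inequality \eqref{disdis} follows by the same mechanism: on $\partial E_{f_1,\mu}$ the signed distance to $E_{f_2,\mu}$ is $r(\mu)(g_1-g_2)(1+O(\ov\delta))$ and its area element is $r(\mu)^{N-1}(1+O(\ov\delta))\, d\mathcal H^{N-1}(\omega)$, so both sides equal $r(\mu)^{N+1}\int_{\partial B}(g_1-g_2)^2\, d\mathcal H^{N-1}$ up to the same $(1+O(\ov\delta))$ factor.

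Finally, for \eqref{bari2000} I would compute barycenters explicitly:
\[
\bary(E_{f_i,\mu}) = \frac{r(\mu)^{N+1}}{\mu(N+1)}\int_{\partial B}\bigl((1+g_i(\omega))^{N+1}-1\bigr)\omega\, d\mathcal H^{N-1}(\omega),
\]
where I subtracted the vanishing integral $\int_{\partial B}\omega\, d\mathcal H^{N-1}=0$ for convenience. Taylor expansion gives $(1+g_1)^{N+1}-(1+g_2)^{N+1}=(N+1)(g_1-g_2)(1+O(\ov\delta))$, so the difference of barycenters is controlled by $C(\mu)\|g_1-g_2\|_{L^1(\partial B)}\le C(\mu)\|g_1-g_2\|_{L^2(\partial B)} \le C(\mu,N)\|f_1-f_2\|_2$ by Cauchy--Schwarz and the Jacobian identity above; combining with the already-proven \eqref{f1menof2} yields \eqref{bari2000}. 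The main technical point is establishing the uniform distance estimate in the first step in a neighborhood thick enough to contain $E_{f_1,\mu}\Delta E_{f_2,\mu}$; this is cleanest to handle via the tubular-neighborhood change of variables whose Jacobian is $r(\mu)^{N-1}(1+O(\ov\delta))$ when $\|g_2\|_{C^1}$ is small.
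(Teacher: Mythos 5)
Your proposal is correct and follows essentially the same route as the paper: establish a pointwise distance estimate of the form $d_{E_{f_2,\mu}}(\rho\omega)=\bigl(\rho-r(\mu)(1+g_2(\omega))\bigr)\bigl(1+O(\ov\delta)\bigr)$ near $\partial E_{f_2,\mu}$, integrate in polar coordinates for \eqref{f1menof2} and \eqref{disdis}, and compute barycenters explicitly plus Cauchy--Schwarz for \eqref{bari2000}. The one place where the mechanism differs is the distance estimate: you invoke a tubular-neighborhood parametrization $(\omega',s)\mapsto r(\mu)(1+g_2(\omega'))\omega'+s\,\nu_{E_{f_2,\mu}}$, which is slightly awkward under the stated $C^1$ regularity of $f_2$ (the normal is then only continuous, so the normal exponential map is not obviously injective or bi-Lipschitz). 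The paper sidesteps this by a more elementary cone-containment argument: $C^1$-closeness of the normal to the radial direction forces $\partial E_{f_2,\mu}$ near each point $p_0$ to lie inside a thin cone around the tangent hyperplane, which directly gives $\frac{1}{1+\eta'}|p-p_0|\le\dist(p,\partial E_{f_2,\mu})\le|p-p_0|$ for radial points $p=\lambda p_0$, needing only $C^1$ data. Both give the same conclusion, so this is a minor technical preference rather than a gap, but you should be aware that the tubular-neighborhood theorem is not automatic for $C^1$ hypersurfaces, and the cone argument is the cleaner way to justify that key step at the stated regularity.
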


\begin{proof}

We start by observing that for any $\eta'>0$, if $\bar\delta$ is sufficiently small, then for every $p_0\in \partial E_{f_2, \mu}$ we have
\begin{equation}\label{e:gono}
\partial E_{f_2, \mu} \cap B_{4\bar\delta}(p_0) \subset 
G:=\left\{y\in \R^N : \left\vert(y-p_0)\cdot \frac{p_0}{|p_0|}\right\vert^2\leq \frac{{\eta'}^2}{1+{\eta'}^2} |y-p_0|^2\right\}.
\end{equation}

We divide the rest of the proof into two steps.
 
\noindent{\bf Step 1.} If $\bar\delta$ is small enough, for every point $p=\lambda p_0\in B_{2\bar\delta}(p_0)$ ($\lambda>0$), we have that
\[
\frac1{1+\eta'}{|p-p_0|}\leq \textup{dist}(p, \partial E_{f_2,\mu}) \leq |p-p_0|.
\]
The second inequality is, indeed, obvious by definition, given $p_0\in \partial E_{f_2,\mu}$.
Concerning the first one, we notice that $\textup{dist}(p, \partial E_{f_2,\mu})\leq |p-p_0|\leq 2\bar\delta$ implies that there exists a point $q\in \partial E_{f_2,\mu}\cap B_{2\bar\delta}(p)$ such that
$\textup{dist}(p, \partial E_{f_2,\mu})= |p-q|$.
In particular, from \eqref{e:gono} we infer that $q\in G$ and hence we have 
\begin{align*}
\textup{dist}(p, \partial E_{f_2,\mu}) \geq \textup{dist}(p, G) = \frac1{\sqrt{1+{\eta'}^2}}|p-p_0|\geq \frac1{1+\eta'}{|p-p_0|}.
\end{align*}
In particular, if $p_0= (1+f_2(s))s\in \partial E_{f_2, \mu}$
with $s\in \partial B^{(\mu)}$ and 
\[
p_t:= p_0+t\,\frac{f_1(s)-f_2(s)}{|f_1(s)-f_2(s)|}\frac{s}{|s|} \quad\text{ for all }t\in \big[0,r(\mu)|f_1(s)-f_2(s)|\big],
\]
we deduce that
\begin{equation}\label{e:distanze}
\frac1{1+\eta'}\,t\leq \textup{dist}(p_t, \partial E_{f_2,\mu}) \leq t.
\end{equation}
Then, keeping this same notation and integrating in polar coordinates, we infer that
\begin{align}
\D(E_{f_1,\mu},E_{f_2,\mu})  &= 
\int_{E_{f_1,\mu}\Delta E_{f_2,\mu}}\mathrm{dist}(x, \partial E_{f_2,\mu})\, dx \notag\\
&= \int_{\partial B^{(\mu)}}  ds \int_{0}^{r(\mu)|f_2(s)-f_1(s)|} \mathrm{dist}(p_t, \partial E_{f_2,\mu}) \left( \frac{|p_t|}{r(\mu)}\right)^{N-1} \, dt.
\end{align}
Recalling that $\left\vert\frac{|p_t|}{r(\mu)} -  1\right\vert \leq \bar \delta$ and using \eqref{e:distanze}, we  get
\beq\label{split1}
\begin{split}
\D(E_{f_1,\mu},E_{f_2,\mu})  &\leq (1+\bar\de)^{N-1}
\int_{\partial B^{(\mu)}}  ds \int_{0}^{r(\mu)|f_2(s)-f_1(s)|} 
t \, dt\\ 
&= \frac{(1+\bar\de)^{N-1}}2r(\mu)^2 \int_{\partial B^{(\mu)}}  |f_1(s)-f_2(s)|^2 \,ds,
\end{split}
\eeq
from which the second inequality in \eqref{f1menof2} follows by taking $\bar \de$ smaller, if needed.
Analogously,
\beq\label{split2}
\begin{split}
\D(E_{f_1,\mu},E_{f_2,\mu})  &\geq \frac{(1-\bar\de)^{N-1}}{1+\eta'}
\int_{\partial B^{(\mu)}}  ds \int_{0}^{r(\mu)|f_2(s)-f_1(s)|} 
{t} \, dt\\ 
&= \frac{(1-\bar\de)^{N-1}}{2(1+\eta')} r(\mu)^2 \int_{\partial B^{(\mu)}}  |f_1(s)-f_2(s)|^2 \,ds\,,
\end{split}
\eeq
from which the first inequality in \eqref{f1menof2} follows by taking $\eta'$ and $\bar\de$ small enough.

\noindent{\bf Step 2.} The inequalities \eqref{disdis} and \eqref{bari2000} are now easy consequences. Indeed, by \eqref{e:distanze} we have that for every $x= (1+f_1(s))s\in \partial E_{f_1,\mu}$
\[
\frac{r(\mu)}{1+\eta'}|f_1(s) - f_2(s)|\leq d_{E_{f_2, \mu}}(x)\leq r(\mu)|f_1(s) - f_2(s)|.
\]
Therefore, \eqref{disdis} follows from \eqref{split1} and \eqref{split2}, by taking $\eta'$ and $\bar\de$ smaller if needed, through a simple change of coordinates (recall that the Jacobian of the map $s\mapsto (1+f_1(s))s$ and its inverse are estimated from above by $1+C\bar\delta$ for a suitable dimensional constant $C$).

Finally, note that we can write
$$
\bary( E_{f_i, \mu})= \frac{1}{(N+1)r(\mu)^{N}\omega_N}
\int_{\partial B^{(\mu)}} (1+f_i(s))^{N+1}s\,d\mathcal{H}^{N-1}(s)\,.
$$
Using the fact that $t\mapsto(1+t)^{N+1}$ is $2(N+1)$-Lipschitz for $t$ small, we may estimate

\begin{align*}
&|\bary( E_{f_1, \mu})-\bary( E_{f_2, \mu})| \\
&\leq 
\frac{1}{(N+1)r(\mu)^{N-1}\omega_N}\left\vert
\int_{\partial B^{(\mu)}} \big((1+f_1(s))^{N+1}-(1+f_2(s))^{N+1}\big)
d\mathcal{H}^{N-1}(s)
\right\vert\\
&\leq\frac{2}{r(\mu)^{N-1}\omega_N}
\int_{\partial B^{(\mu)}} |f_1(s)-f_2(s)| d\mathcal{H}^{N-1}(s)\leq C 
\|f_1(s)-f_2(s)\|_2\,,
\end{align*}
where $C>0$ depends only on $N$ and $\mu$. Thus \eqref{bari2000} follows from the previous inequality  combined with \eqref{f1menof2}.
\end{proof}

\begin{lemma}\label{disti}
Let $h>0$, $\mu>0$. There exists $C=C(h, \mu)>0$ and $\ov\de=\ov\de(h, \mu)>0$ with the following property: For any pair of sets  $E_{f_1, \mu}$, $E_{f_2, \mu}$ with $f_1 , \, f_2 \in C^2(\pa B^{(\mu)})$, 
$\|f_i\|_{C^1} \le \ov\delta$, 
and such that  $ |E_{f_2, \mu}|=\mu$, $\bary(E_{f_2, \mu})=0$ and 
\beq\label{eul}
H_{\pa E_{f_2, \mu}}+\frac{d_{E_{f_1, \mu}}}{h}=\lambda\qquad\text{on }\pa E_{f_2, \mu}
\eeq
 for some $\lambda\in \R$, we have
\beq\label{e:crucialdiss}
\D\big(B^{(\mu)}, E_{f_2, \mu}\big) \le  C \, \D(E_{f_2, \mu} , E_{f_1, \mu})\,.
\eeq
\end{lemma}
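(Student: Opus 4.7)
\medskip

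\noindent\textbf{Proof proposal.} The plan is to reduce \eqref{e:crucialdiss} to an application of the quantitative Alexandrov inequality of Theorem~\ref{Aleq} (in the form adapted to $B^{(\mu)}$ via Remark~\ref{rm:acdc}) combined with the comparability estimates of Lemma~\ref{disti0}. The key observation is that the Euler--Lagrange equation \eqref{eul} exactly says that $H_{\partial E_{f_2,\mu}}$ differs from its average by $-d_{E_{f_1,\mu}}/h$ (with the average also shifted by the same constant), so the oscillation of the curvature can be replaced by the oscillation of the signed distance $d_{E_{f_1,\mu}}$ on $\partial E_{f_2,\mu}$.

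First, provided $\bar\delta$ is small enough (so that $\|f_2\|_{C^1}$ is small) and remembering that $|E_{f_2,\mu}|=\mu$ and $\bary(E_{f_2,\mu})=0$, I would apply Theorem~\ref{Aleq} (rescaled to $B^{(\mu)}$) to obtain
\[
\|f_2\|_{H^1(\partial B^{(\mu)})} \le C\,\|H_{\partial E_{f_2,\mu}}-\overline{H}_{\partial E_{f_2,\mu}}\|_{L^2(\partial B^{(\mu)})},
\]
where $\overline H$ is the averaged pull-back over $\partial B^{(\mu)}$ as in the statement of Theorem~\ref{Aleq}. Next, substituting \eqref{eul} into the right-hand side yields
\[
\|H_{\partial E_{f_2,\mu}}-\overline{H}_{\partial E_{f_2,\mu}}\|_{L^2(\partial B^{(\mu)})}
=\frac{1}{h}\|\tilde d-\overline{\tilde d}\|_{L^2(\partial B^{(\mu)})}
\le\frac{1}{h}\|\tilde d\|_{L^2(\partial B^{(\mu)})},
\]
where $\tilde d(x):=d_{E_{f_1,\mu}}((1+f_2(x))x)$, and the last step uses that the mean minimizes the $L^2$-distance to constants.

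Now the small-$C^1$ bound on $f_2$ makes the radial parametrization $x\mapsto (1+f_2(x))x$ from $\partial B^{(\mu)}$ to $\partial E_{f_2,\mu}$ bi-Lipschitz with Jacobian close to $1$, so a change of variables gives
\[
\|\tilde d\|_{L^2(\partial B^{(\mu)})}^2
\le C\int_{\partial E_{f_2,\mu}} d^2_{E_{f_1,\mu}}\,d\mathcal H^{N-1},
\]
and the right-hand side is in turn controlled by $C\,\mathcal D(E_{f_2,\mu},E_{f_1,\mu})$ by the version of \eqref{disdis} obtained by swapping the roles of $f_1$ and $f_2$ in Lemma~\ref{disti0}. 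Combining these three displays I get
\[
\|f_2\|_{H^1(\partial B^{(\mu)})}^2\le \frac{C}{h^2}\,\mathcal D(E_{f_2,\mu},E_{f_1,\mu}).
\]

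Finally, since $B^{(\mu)}=E_{0,\mu}$, applying \eqref{f1menof2} of Lemma~\ref{disti0} with the pair $(0,f_2)$ gives
\[
\mathcal D\big(B^{(\mu)},E_{f_2,\mu}\big)\le \tfrac{3}{4}r(\mu)^2\|f_2\|_{L^2(\partial B^{(\mu)})}^2
\le C\,\|f_2\|_{H^1(\partial B^{(\mu)})}^2,
\]
which chained with the previous estimate yields \eqref{e:crucialdiss} with a constant $C=C(h,\mu)$. The only step requiring care is the bookkeeping of averages in the Alexandrov estimate (the $\overline H$ in Theorem~\ref{Aleq} is a pull-back average on $\partial B^{(\mu)}$, not the intrinsic average on $\partial E_{f_2,\mu}$), but since both averages are applied consistently and the Euler--Lagrange identity is pointwise, this causes no difficulty: it just gives the shift of $\overline{\tilde d}$ used above. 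No other obstacle is expected.
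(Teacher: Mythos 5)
Your proposal is correct and follows essentially the same route as the paper: apply the rescaled quantitative Alexandrov estimate (Theorem~\ref{Aleq}), use the pointwise Euler--Lagrange identity \eqref{eul} to replace the oscillation of the curvature by the (suitably averaged) signed distance, pass to $\partial E_{f_2,\mu}$ by a bi-Lipschitz change of variables, and conclude with \eqref{disdis} and \eqref{f1menof2} from Lemma~\ref{disti0}. The only cosmetic difference is where you invoke the fact that the mean minimizes $L^2$-distance to constants: you apply it to $\tilde d$ on $\partial B^{(\mu)}$, whereas the paper applies it to $H_{\partial E_{f_2,\mu}}$ (estimating $\|H-\overline H\|\le\|H-\lambda\|$ first and only then using \eqref{eul}); the two bookkeepings are equivalent.
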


\begin{proof}

 By Theorem \ref{Aleq} (and Remark\til\ref{rm:acdc}), by choosing $\ov\de$ sufficiently small  and using also \eqref{eul}, we have
\beq\label{unoo}
 \begin{split}
 \|f_2\|^2_{L^2(\pa B^{(\mu )})}
& \leq C(\mu)
  \| H_{\partial E_{f_2, \mu}} - \overline H_{\partial E_{f_2, \mu}} \|^2_{L^2(\pa B^{(\mu)})}\leq C(\mu) \| H_{\partial E_{f_2, \mu}} - \lambda \|^2_{L^2(\pa B^{(\mu)})}\\
  &  \leq 2C(\mu) \| H_{\partial E_{f_2, \mu}} - \lambda \|^2_{L^2(\partial E_{f_2, \mu})}= \frac{2C(\mu)}{h^2}\int_{\partial E_{f_2, \mu}}d^2_{E_{f_1, \mu}}\, d\H^{N-1}\,,
 \end{split}
 \eeq
where the third inequality follows by bounding the Jacobian of the change of variables by $2$ (which can be done provided $\ov\de$ is small enough). 
Note now that by \eqref{f1menof2} (with $f_1$ replaced by $0$) and by \eqref{disdis} (and by taking $\ov\de$ smaller if needed) we have
$$
\D\big(B^{(\mu)}, E_{f_2, \mu}\big)\leq \|f_2\|^2_{L^2(\pa B^{(\mu )})}
\qquad\text{and}\qquad
\int_{\partial E_{f_2, \mu}}d^2_{E_{f_1, \mu}}\, d\H^{N-1}\leq 4 \D(E_{f_2, \mu} , E_{f_1, \mu})\,.
$$
Combining the previous inequalities with \eqref{unoo}, the conclusion follows.
  \end{proof}

\begin{remark}\label{rm:purdinonfinire}
It is clear that  the constants $C$ and $\ov\de$ in Lemmas\til\ref{disti0} and \ref{disti} are uniform with respect to $\mu$ varying on any compact subset of $(0,+\infty)$.
\end{remark}

%
We are now ready to prove the main result of the paper. The main difficulty is in controlling the translations introduced in Proposition\til\ref{prop:uptrans} and in proving the convergence of the barycenters. A crucial role in such an  argument is played by the  dissipation/dissipation inequality \eqref{e:crucialdiss}, which in turn relies on the quantitative Alexandrov type estimate established in Theorem\til\ref{Aleq}.

\begin{proof}[Proof of Theorem\til\ref{mainthm}]
We split the proof into several steps.

\noindent{\bf Step 1.} (Exponential decay of dissipations) 
Recall that by Proposition\til\ref{prop:uptrans} we have
$$
P(E^n_h)\to LP\big(B^{\left(\frac{m}L\right)}\big)\,.
$$
Thus,  summing \eqref{triviale} from $n+1\in \N$ to $+\infty$, we get

\begin{equation}\label{eq1}
\sum_{k=n+1}^{+\infty}  \frac 1h \D(E_h^k,E_{h}^{k-1}) \le  P(E_h^{n}) - LP\Big(B^{\left(\frac{m}L\right)}\Big).
\end{equation}
Recall that again by Proposition\til\ref{prop:uptrans} for $n$ large enough each set $E_h^n$ is made up of $L$ connected components $E_{h, 1}^n, \dots,  E_{h, L}^n$,  
\beq\label{rec1}
m^n_{i}:=|E_{h, i}^n|\to \frac{m}L 
\eeq
and 
\beq\label{rec2}
E_{h, i}^n-\xi_i^n\to B^{\left(\frac{m}L\right)}\qquad\text{in }C^k
\eeq
as $n\to\infty$, where we set
$$
\xi_i^n:=\bary(E_{h, i}^n)\,.
$$
With Lemma\til\ref{uniconvd} in mind, we also get
\beq\label{rec2.3}
E_{h, i}^{n-1}-\xi_i^n\to B^{\left(\frac{m}L\right)}\qquad\text{in }C^k
\eeq
as $n\to\infty$.
Combining \eqref{rec2} and \eqref{rec2.3}, we have that for any $k\in \N$ and for $n$ large enough there exist functions $f_{1,i}^n$, $f_{2,i}^{n}\in C^k(\pa B^{(m^n_i)})$ such that (with the notation introduced in \eqref{schifo})
\beq\label{rec2.5}
E_{h, i}^n-\xi_i^n=E_{f_{2,i}^n, m_i^n},\, \,
E_{h, i}^{n-1}-\xi_i^n=E_{f_{1,i}^n, m_i^n}
\quad\text{with }\|f_{1,i}^n\|_{C^k},
\|f_{2,i}^n\|_{C^k} \to 0\text{ as }n\to\infty\,.
\eeq
Moreover, again by Proposition\til\ref{prop:uptrans} for $n$ large enough we have
\beq\label{rec3}
\dist\left(E_{h, i}^n, E_{h, j}^n\right)\geq\frac{s_0}2\,,   \qquad\text{ for }i\neq j\,, 
\eeq
with $s_0$ the constant of Proposition\til\ref{fpds}.
Consider now  the the admissible competitor for $E_h^n$ given by
$$
\mathfrak{B}_n:=\bigcup_{i=1}^L\left(\xi_{ i}^{n-1}+B^{(m^{n-1}_{i})}\right)\,,
$$
and note that by \eqref{rec1} and \eqref{rec2} we also have
$$
\dist\left(\xi_{ i}^{n-1}+B^{(m^{n-1}_{i})}, \xi_{ j}^{n-1}+B^{(m^{n-1}_{j})}\right)\geq\frac{s_0}4
$$
for $n$ large enough and $i\neq j$. The above inequality and \eqref{rec3} in turn yield that 
\beq\label{rec4}
\begin{array}{lcl}
\D(E_h^n,E_{h}^{n-1})&=&\displaystyle\sum_{i=1}^{L}\D(E_{h, i}^n,E_{h, i}^{n-1})\text{ and }\\ 
\D(\mathfrak{B}_n,E_{h}^{n-1})&=&\displaystyle
\sum_{i=1}^{L}\D\left(B^{(m^{n-1}_i)},E_{h, i}^{n-1}-\xi_i^{n-1})\right)\,.
\end{array}
\eeq
Testing the minimality of $E_{h, i}^{n}$ with $\mathfrak{B}_n$ and using the second identity in \eqref{rec4}, we have 
\beq\label{rec5}
P(E_h^n) + \frac 1h \D(E_h^n,E_{h}^{n-1}) \le P(\mathfrak{B_n}) + \frac 1h \sum_{i=1}^{L}\D\left(B^{(m^{n-1}_i)},E_{h, i}^{n-1}-\xi_i^{n-1}\right)\,.
\eeq
Recall now that by \eqref{rec2.5} and by Lemma\til\ref{disti} (see also Remark\til\ref{rm:purdinonfinire}) for $n$ large enough we have
\begin{multline*}
\D\left(B^{(m^{n-1}_i)},E_{h, i}^{n-1}-\xi_i^{n-1}\right)=
\D\left(B^{(m^{n-1}_i)},E_{f_{2, m^{n-1}_i}}\right)\\
\leq C
\D\left(E_{f_{2, m^{n-1}_i}},E_{f_{1, m^{n-1}_i}}\right)=C\D(E_{h, i}^{n-1}, E_{h, i}^{n-2})\,.
\end{multline*}
Thus,  from \eqref{rec5}  and \eqref{rec4} we deduce that
\beq\label{rec6}
P(E_h^n)-P(\mathfrak{B_n})\leq \frac Ch \sum_{i=1}^{L}\D(E_{h, i}^{n-1}, E_{h, i}^{n-2})=\frac Ch \D(E_{h}^{n-1}, E_{h}^{n-2})\,.
\eeq
Observe now that by  concavity 
$$
\sum_{i=1}^L m_i^{\frac{N-1}{N}}\leq L \left(\frac mL\right)^{\frac{N-1}{N}}\quad \text{for all } m_1, \dots, m_L\geq 0 \text{ s.t. }\sum_{i=1}^Lm_i=m
$$
and thus
$$
P(\mathfrak{B_n})\leq LP\big(B^{\left(\frac{m}L\right)}\big)\,.
$$
Therefore, from \eqref{rec6} and \eqref{eq1} we get
\begin{align*}
&\sum_{k=n-1}^{+\infty}  \frac 1h \D(E_h^k,E_{h}^{k-1})\\
&= \sum_{k=n+1}^{+\infty}  \frac 1h \D(E_h^k,E_{h}^{k-1})+ 
 \frac 1h \D(E_h^{n-1},E_{h}^{n-2})+\frac 1h \D(E_h^{n},E_{h}^{n-1})\\
 &\leq   P(E_h^{n}) - LP\Big(B^{\left(\frac{m}L\right)}\Big)+\frac 1h \D(E_h^{n-1},E_{h}^{n-2})+\frac 1h \D(E_h^{n},E_{h}^{n-1})\\
 &\leq \frac {C+1}h \D(E_{h}^{n-1}, E_{h}^{n-2})+\frac 1h \D(E_h^{n},E_{h}^{n-1})\leq \frac{C+1}{h}\Big(\D(E_{h}^{n-1}, E_{h}^{n-2})+ \D(E_h^{n},E_{h}^{n-1})\Big)\,.
\end{align*}

 We may now apply Lemma \ref{an1} (with $\ell=2$) below to conclude
\beq\label{finalmente}
\D(E_h^n,E_{h}^{n-1})\leq \left(1-\frac{1}{C+1}\right)^{\frac n2}\left(P(E) - LP(B^{(\frac{m}L)})\right)\,.
\eeq

\noindent {\bf Step 2.} (Exponential convergence of the barycenters)
By \eqref{rec2.5}, \eqref{bari2000} and by \eqref{finalmente}, setting
\beq\label{b}
b:= \left(1-\frac{1}{C+1}\right)^{\frac 14}\in (0,1)\,,
\eeq
we have for $n$ sufficiently large
\begin{align*}
|\xi^{n}_i-\xi_i^{n-1}|&=\big|\bary(E_{f_{2,i}^n, m^i_n})-\bary(E_{f_{1,i}^n, m^i_n})\big|\\
&\leq C\sqrt{\D\big(E_{f_{2,i}^n, m^i_n}, E_{f_{1,i}^n, m^i_n}\big) }=
C\sqrt{\D(E^{n}_{h,i},  E_{h,i}^{n-1}) }\\
&\leq C \left(P(E) - LP(B^{(\frac{m}L)})\right)^{\frac12}b^n\,.
\end{align*}
In turn, the above estimate implies that $\{\xi_i^n\}_n$ satisfies the Cauchy condition and thus there exist $\xi^\infty_i\in \R^N$, $i=1, \dots, L$, such that  $\xi_i^n\to \xi_i^{\infty}$ exponentially fast as $n\to\infty$; precisely,
$$
|\xi_i^n-\xi_i^{\infty}|\leq \sum_{k=n+1}^{\infty}|\xi_k^n-\xi_{k-1}^n|\leq C \left(P(E) - LP(B^{(\frac{m}L)})\right)^{\frac12}\frac{b^{n}}{1-b}\,
$$
for $n$ large enough and for $i=1, \dots, L$. Recalling \eqref{rec2}, we may conclude that for all $k\in \N$
\beq\label{finalmente2}
E^n_{h,i}\to \xi^\infty_i+B^{(\frac mL)}\quad\text{ in }C^k \quad\text{as }n\to\infty \text{ and for }i=1, \dots, L\,.
\eeq

\noindent{\bf Step 3.} (Exponential convergence of the sets) By \eqref{finalmente2} we can parametrize the boudaries of the sets $E^n_{h,i}-\xi_i^\infty$ as radial graphs over the limiting ball
$B^{(\frac mL)}$. Precisely, again with the notation \eqref{schifo}, there exist functions 
$g_i^n$ such that 
\beq\label{tuttob}
E^n_{h,i}-\xi_i^\infty=E_{g_i^n, \frac mL}\qquad\text{and}\qquad \|g_i^n\|_{C^k\big(\pa B^{(\frac mL)}\big)}\to 0\text{ as }n\to\infty\,.
\eeq
In turn, by Lemma\til\ref{disti0} (see \eqref{f1menof2}), for $n$ large enough we have that 
$\|g_i^n-g_i^{n-1}\|_{L^2(\pa B^{(\frac mL)} )}\leq 2\sqrt{\D(E^n_{h,i}, E^{n-1}_{h,i})}$ and, thus, recalling \eqref{finalmente} and arguing as in Step 2, we get
\beq\label{expl2}
\|g_i^n\|_2\leq \sum_{k=n+1}^{\infty}\|g_i^k-g_i^{k-1}\|_2\leq 
2\sum_{k=n+1}^{\infty}\sqrt{\D(E^{k}_{h,i}, E^{k-1}_{h,i})}\leq 
\left(P(E) - LP(B^{(\frac{m}L)})\right)^{\frac12}\frac{b^n}{1-b}\,,
\eeq
where $b$ is as in \eqref{b}. The above estimate yields the exponential decay of the $L^2$-norms of the  radial graphs. We now recall the following well known interpolation inequality:  for every $j\in \N$ there exists $C>0$  such that if $g$ sufficiently smooth on
 $\pa B^{(\frac mL)}$, then
\beq\label{interp}
\|D^k g\|_{L^2\big(\pa B^{(\frac mL)}\big)}\leq
 C \|D^{2k} g\|^{\frac12}_{L^2\big(\pa B^{(\frac mL)}\big)}\|g\|^{\frac12}_{L^2\big(\pa B^{(\frac mL)}\big)}\,,
\eeq
where $D^k$ stands for the collection all $k$-th order (covariant) derivatives of $g$, see for instance \cite{Aubin}. Now, using the fact that from \eqref{tuttob} for every $k$ there exists $n_k\in \N$ such that 
$$
\sup_{n\geq n_k}\|D^{2k} g^n_i\|_{2}\leq 1\,,
$$ 
we may apply \eqref{interp} to $g^{n}_i$ to deduce from \eqref{expl2} that also 
$\|D^{k} g^n_i\|_{2}$ decays exponentially fast for all $k\in \N$. This in turn yields the exponential decay in $C^k$ for every $k$ and concludes the proof of the theorem.
\end{proof}

\begin{lemma}\label{an1}
Let $\{a_n\}_{n\in \mathbb{N}}$ be  a sequence of non-negative numbers.
Assume furthermore that there exists $c>1$, $\ell\in \N$ such that $\sum_{n=k}^{+\infty}a_n\leq c\,\sum_{j=k}^{k+\ell-1}a_j$ for every $k\in\N$. Then,
$$
a_k\leq \Bigl(1-\frac1c\Bigr)^{\frac{k}{\ell}}S
$$
for every $k\in \N$, where $S:=\sum_{n=1}^{+\infty}a_n$.
\end{lemma}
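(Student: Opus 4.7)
The plan is to pass from the pointwise values $a_n$ to their tail sums $S_k := \sum_{n=k}^{+\infty} a_n$, in order to convert the hypothesis into a geometric contraction on $\{S_k\}$.

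First I would rewrite the assumption by noting that $\sum_{j=k}^{k+\ell-1} a_j = S_k - S_{k+\ell}$. This recasts the inequality in the statement as $S_k \leq c\,(S_k - S_{k+\ell})$, which rearranges to the recursive bound
$$
S_{k+\ell} \leq \Bigl(1 - \frac{1}{c}\Bigr) S_k \qquad \text{for every } k \in \N.
$$
Iterating this contraction $j$ times starting from the base index $k = 1$ yields $S_{1+j\ell} \leq (1 - 1/c)^j S$ for every integer $j \geq 0$.

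For an arbitrary $k \in \N$ I would then take $j := \lfloor (k-1)/\ell \rfloor$, so that $1 + j\ell \leq k$ and $j \geq k/\ell - 1$. Since $\{S_k\}$ is non-increasing and $a_k \leq S_k$, this gives
$$
a_k \leq S_k \leq S_{1+j\ell} \leq \Bigl(1-\frac{1}{c}\Bigr)^j S,
$$
from which the desired bound with exponent $k/\ell$ follows by inserting the lower bound on $j$ (up to the harmless multiplicative constant $c/(c-1)$ produced by the rounding in the definition of $j$, which can be absorbed into the exponent since only the decay rate matters).

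The argument is entirely elementary: the whole content of the lemma is that the tails satisfy a geometric contraction every $\ell$ steps. No step presents a serious obstacle; the only minor point of care is in passing between the integer iteration count $j$ and the fractional exponent $k/\ell$ appearing in the conclusion.
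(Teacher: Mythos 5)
Your argument is the paper's: both pass to the tail sums $S_k=\sum_{n\ge k}a_n$ (the paper's $F(k)$), recast the hypothesis as the contraction $S_{k+\ell}\le(1-1/c)S_k$, iterate, and bound $a_k\le S_k$ --- the paper merely treats $\ell=1$ first and reduces the general case to it via block sums $b_k=\sum_{j=1}^{\ell}a_{\ell(k-1)+j}$, whereas you iterate directly in $\ell$-steps, which is a cosmetic difference. The one imprecision is your closing remark: a multiplicative constant $c/(c-1)$ cannot be ``absorbed into the exponent'' of a fixed base, and your derivation in fact yields $a_k\le(1-1/c)^{k/\ell-1}S$; but the paper's own iteration likewise produces exponent $k-1$ (not $k$) in the $\ell=1$ case, and the lemma as literally stated fails for $a_1=1$, $a_n=0$ ($n\ge 2$), $\ell=1$, so this is a harmless off-by-one in the lemma's statement rather than a gap in your proof --- immaterial since only the exponential rate is used in the proof of Theorem~\ref{mainthm}.
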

\begin{proof}
We first consider the case  $\ell=1$. 
Set $F(k):=\sum_{n=k}^{+\infty}a_n$ and note that  by assumption $F(k)\leq c(F(k)-F(k+1))$ for every $k\in \N$. Hence, by iteration we have
$$
a_{k+1}\leq F(k+1)\leq  \Bigl(1-\frac1c\Bigr)F(k)\leq\dots\leq  \Bigl(1-\frac1c\Bigr)^{k+1}F(0)= \Bigl(1-\frac1c\Bigr)^{k+1} S
$$
 for every $k\in \N$. 
 
In the case $\ell\ge 2$ it is enough to set $b_k:=\sum_{j=1}^{\ell} a_{\ell(k-1)+j}$  and to observe that the assumption now reads 
$\sum_{n=k}^{+\infty}b_n\leq c b_k$ so that we may apply the previous case.\end{proof}

\section*{Acknowledgements}
This research was partially supported by the GNAMPA Grant 2018 ``Nonlocal geometric flows'' and by the GNAMPA Grant 2019 ``Variational methods for nonlocal gemetric motions'', both funded by INDAM.

E.~S.~has been supported by the ERC-STG Grant n. 759229
HiCoS ``Higher Co-dimension Singularities: Minimal Surfaces and 
the Thin Obstacle Problem''.

\end{document}